\titleformat{\section}{\large\bfseries}{\thesection}{.5em}{}
\newtheorem{theorem}{Theorem}[section]
\newtheorem{remark}[theorem]{Remark}
\newcommand{\la}{\lambda}
\newcommand{\e}{\eta}
\begin{document}
\thispagestyle{empty}

\begin{center}
\textbf{\Large Multiply Balanced Edge Colorings of Multigraphs}

\addvspace{\bigskipamount}
Amin Bahmanian, C. A. Rodger\footnote{rodgec1@auburn.edu} \\
 Department of Mathematics and Statistics, 221 Parker Hall\\
 Auburn University, Auburn, AL\\ USA   36849-5310
\end{center}

\typeout{Abstract}
{\footnotesize  
In this paper, a theorem is proved that generalizes several existing amalgamation results in various ways. The main aim is to disentangle a given edge-colored amalgamated graph so that the result is a graph in which the edges are shared out among the vertices in ways that are fair with respect to several notions of balance (such as between pairs of vertices, degrees of vertices in the both graph and in each color class, etc). The connectivity of color classes is also addressed. Most results in the literature on amalgamations focus on the disentangling of amalgamated complete graphs and complete multipartite graphs. Many such results
follow as immediate corollaries to the main result in this paper, which addresses amalgamations of graphs in general, allowing for example the final graph to have multiple edges.  A  new corollary of the main theorem is the settling of the existence of Hamilton decompositions of the family of graphs $K(a_1,\dots, a_p;\lambda_1, \lambda_2)$; such graphs arose naturally in statistical
settings.
 
Keywords. Amalgamations; Detachment; Hamiltonian Decomposition; Edge-coloring; Hamiltonian Cycles}

\section{Introduction}

Throughout this paper, all graphs are finite and undirected (possibly with loops and multiple edges). The letters $G$ and $H$ denote graphs. Sets may contain repeated elements (so are really multisets). Each edge is represented by a 2-element multisubset of the vertex set; in particular $\{u,u\}$ represents a loop on the vertex $u$.  A \textit{k-edge-coloring} of $G$ is a mapping ${\cal K}:E(G)\rightarrow C$, where $C$ is a set of $k$ \textit{colors} (often we use $C=\{1,\ldots,k\}$), and the edges of one color form a \textit{color class}.

Informally speaking, \textit{amalgamating} a finite graph $G$ can be thought of as taking $G$, partitioning its vertices, then for each element of the partition squashing the vertices to form a single vertex in the amalgamated graph $H$. Any edge incident with an original vertex in $G$ is then incident with the corresponding new vertex in $H$, and any edge joining two vertices that are squashed together in $G$ becomes a loop on the new vertex in $H$. 

More precisely, $H$ is an \textit{amalgamation} of $G$  if there exists a function $\phi$ called an \textit{amalgamation function} from $V(G)$ onto $V(H)$ and a bijection $\phi':E(G)\rightarrow E(H)$ such that $e$ joining $u$ and $v$ is in $E(G)$ if and only if $\phi'(e)$ joining $\phi(u)$ and $\phi(v)$ is in $E(H)$; We write $\phi(G)=H$. In particular, this requires that $e$ be a loop in $H$ if and only if, in $G$, it either is a loop or joins distinct vertices $u,v$, such that $\phi(u) = \phi(v)$.  (Note that $\phi'$ is completely determined by $\phi$.) Associated with $\phi$ is the \textit {number function}  $\e:V(H)\rightarrow \mathbb N$ defined by $\e(v)=|\phi^{-1}(v)|$, for each $v\in V(H)$. We also shall say that $G$ is a \textit{detachment} of $H$ in which each vertex $v$ of $H$ splits (with respect to $\phi$) into the vertices in $\phi^{-1}(\{v\})$. 

A \textit{detachment} of $H$ is, intuitively speaking, a graph obtained from $H$ by splitting some or all of its vertices into more than one vertex. If $\e$ is a function from $V(H)$ into $\mathbb {N}$, then an \textit{$\e$-detachment} of $H$ is a detachment of $H$ in which each vertex $u$ of $H$ splits into $\e(u)$ vertices. In other words, $G$ is an $\e$-detachment  of $H$ if  there exists an amalgamation function $\phi$ of $G$ onto $H$ such that $|\phi^{-1}(\{u\})|=\e(u)$ for every $u\in V(H)$. Some authors refer to detachments as \textit{disentanglements} (see \cite{LR1, LR2, LR3}).

Since two graphs $G$ and $H$ related in the above manner have an obvious bijection between the edges, an edge-coloring of $G$ or $H$, naturally induces an edge-coloring on the other graph. Hence an amalgamation of a graph with colored edges is a graph with colored edges.

By the \textit {multiplicity} of a pair of vertices  $u,v$ of $G$, we mean the number of edges joining $u$ and $v$ in $G$. A graph is said to be: (i) \textit {almost regular} if there is an integer $d$ such that every vertex has degree $d$ or $d+1$, (ii) \textit {equimultiple} if there is an integer $d$ such that every pair of vertices has multiplicity $d$ or $d+1$, (iii) \textit{$P$-almost-regular} (where $P=\{P_1,\ldots,P_r\}$ is a partition of $V(G)$) if for $1\leq i\leq r$, there is an integer $d_i$ such that each vertex in $P_i$ has degree $d_i$ or $d_i+1$.

In this paper, $K_n$ denotes the complete graph with $n$ vertices, $K_{m,\ldots,m}$ denotes the complete multipartite graph each part having $m$ vertices, and $K(a_1,\ldots, a_p;\lambda_1,\lambda_2)$ denotes a graph with $p$ parts, the $i^{th}$ part having size $a_i$, in which multiplicity of every pair of vertices in the same part (in different parts) is  $\lambda_1$ ($\lambda_2$, respectively). If we replace every edge of $G$ by $\lambda$ multiple edges, then we denote the new graph by $\lambda G$. 

The main goal of this paper is to prove Theorem \ref{mainth}. Informally, it states that for a given $k$-edge-colored graph $H$ and a function $\e:V(H)\rightarrow \mathbb N$, there exists a loopless $\e$-detachment $G$ of $H$ with  amalgamation function $\phi:V(G)\rightarrow V(H)$, $\e$ being the number function associated with $\phi$, such that: 
(i) $G$ and each of its color classes are $P$-almost-regular where $P=\{\phi^{-1}(v):v\in V(H)\}$, (ii) the subgraph of $G$ induced by $\phi^{-1}(v)$ is equimultiple for each $v\in V(H)$, as are each of its color classes, (iii) the bipartite subgraph of $G$ formed by the edges joining vertices in $\phi^{-1}(u)$ to the vertices in $\phi^{-1}(v)$ is equimultiple for every pair of distinct $u,v \in V(H)$, as are each of its color classes, and 
(iv) under certain conditions, the subgraph induced by each color class can be guaranteed to have the same number of components in $G$ as in $H$. 
The conditions (ii) and (iii) can be used to force $G$ to be multigraphs of interest, such as $\lambda K_n, \lambda K_{m,\ldots, m}$, or $K(a_1,\ldots, a_p;\lambda_1,\lambda_2)$.  As in previous results, condition (iv) is especially useful in the context of Hamiltonian decompositions, since it can be used to force connected color classes in $H$ to remain connected in $G$.

A Hamiltonian decomposition of a graph $G$ is a partition of the edges of $G$ into sets, each of which induces a spanning cycle. Hamiltonian decompositions have been studied since 1892, when Walecki \cite{L} proved the classic result that $K_n$ is Hamiltonian decomposable if and only if $n$ is odd. In 1976 Laskar and Auerbach \cite{LA} settled the existence of Hamiltonian decomposition of the complete multipartite graph $K_{m,\ldots,m}$ and of $K_{m,\ldots,m}-F$ where $F$ is a 1-factor. Nash-Williams \cite{NashConj} conjectured that every $2k$-regular graph with at most $4k+1$ vertices has a Hamiltonian decomposition. 

Several techniques have been used for finding Hamiltonian decompositions. The technique of vertex amalgamation, which was developed in the 1980s by Hilton and Rodger \cite{H2, HR}, has proved to be very powerful in constructing Hamiltonian decompositions of various classes of graphs, especially in obtaining embedding results; see also \cite{HJRW,MatJohns,N,RW}. Buchanan \cite{B} used amalgamations to prove that for any $2$-factor $U$ of $K_n$, $n$ odd, $K_n-E(U)$ admits a Hamiltonian decomposition. Rodger and Leach \cite{LR1} solved the corresponding existence problem for complete bipartite graphs, and obtained a solution for complete multipartite graphs when $U$ has no small cycles \cite{LR2}. See also \cite{DBry, McCR} for different approach to solve this problem. Detachments of graphs have also been studied in \cite{BeJacksonJor,JacksonJor}, generalizing some results of Nash-Williams \cite{NashW,NashW85}.

The main theorem of this paper, Theorem \ref{mainth}, not only generalizes several well-known graph amalgamation results, (for example, in \cite{H2, HR, LR1, LR3, RW},Theorem 1, Theorem 1, Theorem 3.1, Theorem 2.1 and Theorem 2.1 respectively all follow as immediate corollaries)), but also provides the right tool to find necessary and sufficient conditions for $K(a_1,\ldots, a_p;\lambda_1,\lambda_2)$ to be Hamiltonian decomposable, as shown in Theorem \ref{hdka1apl1l2}.  The latter graph, $K(a_1,\ldots, a_p;\lambda_1,\lambda_2)$, is of particular interest to statisticians, who consider group divisible designs with two associate classes, beginning over 50 years ago with the work of Bose and Shimamoto \cite{BoSh}. Recently, partitions of the edges of  $K(a_1,\ldots, a_p;\lambda_1,\lambda_2)$ into sets, each of which induces a cycle of length $m$, have been extensively studied for small values of $m$ \cite{FuRod98, FuRod01, FuRodSar}. Theorem \ref{hdka1apl1l2}  provides a companion to this work, settling the problem completely for longest (i.e. Hamiltonian) cycles with a really neat proof.  When $a_1 = \ldots = a_p=a$, we denote $K(a_1,\ldots, a_p;\lambda_1,\lambda_2)$ by $K(a^{(p)};\lambda_1,\lambda_2)$. It is expected that Theorem \ref{mainth} can be used to provide conditions under which one can embed an edge-colored $K(a^{(p)};\lambda_1,\lambda_2)$ into an edge-colored $K(a^{(p+r)};\lambda_1,\lambda_2)$ such that every color class of $K(a^{(p+r)};\lambda_1,\lambda_2)$ induces a Hamiltonian cycle. However obtaining such results will be much more complicated than for companion results for simple graphs, with a complete solution unlikely to be found in the near future.

We describe terminology and notation in Section \ref{term}. Then we prove the main result in Section \ref{mainthsec}. In Section \ref{hdka1ap}, we give necessary and sufficient conditions for the graph $K(a_1,\ldots, a_p;$   $\lambda_1, \lambda_2)$ to be Hamiltonian decomposable as a corollary of the main theorem.

\section{Terminology and more definitions}\label{term}
In this paper, $\mathbb R$ denotes the set of real numbers, $\mathbb{N}$ denotes the set of positive integers, and $\mathbb{Z}_k$ denotes the set of integers $\{1,\ldots,k\}$. If $f$ is a function from a set $X$ into a set $Y$ and $y\in Y$, then $f^{-1}(y)$ denotes the set $\{x\in X:f(x)=y\}$, and $f^{-1}[y]$ denotes $\{x\in X:f(x)=y\}\backslash\{y\}$. 
 If $x, y$ are real numbers, then $\lfloor x \rfloor$ and $\lceil x \rceil$ denote the integers such that $x-1<\lfloor x \rfloor \leq x \leq \lceil x \rceil < x+1$, and $x\approx y$ means $\lfloor y \rfloor \leq x\leq \lceil y \rceil$. We observe that for $x, y, z, x_1,\ldots x_n \in \mathbb{R}, a, b, c \in \mathbb{Z}$, and  $n\in \mathbb{N}$: (i) $a\approx x$ implies $a\in \{\lfloor x \rfloor, \lceil x \rceil \}$, (ii) $x\approx y$ implies $x/n\approx y/n$ (iii) the relation $\approx$ is transitive (but not symmetric), (iv) $x_i\approx x$ for $1\leq i\leq n$ implies $(\sum_{i=1}^n x_i)/n\approx x$, (v) $x\approx y$  and $y<a$ implies $x\leq a$, and (vi) $a=b-c$ and $c\approx x$, implies $a\approx b-x$. These properties of $\approx$ will be used in Section \ref{mainthsec} when required without further explanation.

We make extensive use of edge-colorings in this paper. An edge-coloring of a multigraph is (i) \textit{equalized} if the number of edges colored with any two colors differs by at most one, (ii) \textit{balanced} if for each pair of vertices, among the edges joining the pair, the number of edges of each color differs by at most one from the number of edges of each other color, and (iii) \textit{equitable} if, among the edges incident with each vertex, the number of edges of each color differs by at most one from the number of edges of each other color. Equitable edge-coloring has been used in scheduling and timetabling problems \cite{deW71-2,M}. In \cite{deW71, deW71-2, deW75, deW75-2} de Werra studied balanced equitable edge-coloring of bipartite graphs. His techniques easily allows his coloring to be equalized as well, so we have the following result:
\begin{theorem} \label{BEE}
Every finite bipartite graph has a balanced, equitable and equalized $k$-edge-coloring for each $k\in \mathbb N$. 
\end{theorem}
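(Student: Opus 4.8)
The plan is to reduce everything to the two-colour case and then lift it to general $k$ by a potential-function recolouring argument in the spirit of de Werra, so I describe the case $k=2$ first.

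\emph{The case $k=2$.} Let $G$ be a bipartite multigraph with parts $A,B$. Between each pair $\{a,b\}$ with $a\in A$, $b\in B$, group the $m(a,b)$ parallel edges into $\lfloor m(a,b)/2\rfloor$ disjoint pairs together with at most one leftover edge, and colour each grouped pair with one edge of colour $1$ and one of colour $2$. This contributes exactly $\lfloor m(a,b)/2\rfloor$ edges of each colour between $a$ and $b$, hence exactly $\sum_{b}\lfloor m(v,b)/2\rfloor$ edges of each colour at every vertex $v$, and it already makes the colouring balanced on every pair no matter how the leftover edges are coloured. The leftover edges form a simple bipartite graph $G'$ (at most one edge per pair), and $\deg_G(v)-\deg_{G'}(v)$ is even for every $v$; so it suffices to $2$-colour $G'$ so that at each vertex the two colour-degrees differ by at most one and the two colour classes of $G'$ differ in size by at most one. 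To do this, adjoin to $G'$ a new vertex $x$ placed in part $B$ and joined by one edge to each odd-degree vertex of $G'$ in $A$, a new vertex $y$ placed in part $A$ and joined by one edge to each odd-degree vertex of $G'$ in $B$, and, if needed to make $\deg x$ and $\deg y$ even, the edge $xy$; the resulting bipartite multigraph $G''$ has all degrees even. Each component of $G''$ has an Euler circuit, and since $G''$ is bipartite every such circuit has even length; colour the edges of each circuit alternately $1,2,1,2,\dots$. Then every vertex of $G''$ gets exactly half of its degree in each colour and every component contributes exactly half of its edges to each colour. Now delete $x$, $y$, and $xy$: each odd-degree vertex of $G'$ loses exactly one edge and every other vertex loses none, so the colour-degrees at each vertex of $G'$ differ by at most one; and because $x$ and $y$ had even degree in $G''$ the set of deleted edges is colour-balanced up to one edge, so the two colour classes of $G'$ differ in size by at most one. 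Adding back the contributions of the grouped pairs yields a balanced, equitable, and equalized $2$-colouring of $G$.

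\emph{From $k=2$ to general $k$.} Assign to each $k$-edge-colouring $\mathcal{K}$ the nonnegative integer
\[
\Phi(\mathcal{K})=\sum_{i=1}^{k}\Bigl(\sum_{v}\binom{\deg_i(v)}{2}+\sum_{\{u,v\}}\binom{m_i(u,v)}{2}+\binom{|E_i|}{2}\Bigr),
\]
where $\deg_i$, $m_i$, $E_i$ refer to colour class $i$. The three balance conditions say precisely that $|\deg_i(v)-\deg_j(v)|\le 1$, $|m_i(u,v)-m_j(u,v)|\le 1$, and $\bigl||E_i|-|E_j|\bigr|\le 1$ for all colours $i,j$ and all $v$, $\{u,v\}$; equivalently, that for every pair of colours $i,j$ the $2$-colouring that $\mathcal{K}$ induces on the bipartite subgraph $H_{ij}$ with edge set $E_i\cup E_j$ is itself balanced, equitable, and equalized within $H_{ij}$. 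If $\mathcal{K}$ is not yet balanced/equitable/equalized, choose a pair $i,j$ witnessing this and recolour all edges of $H_{ij}$ by a balanced, equitable, equalized $2$-colouring in the colours $i,j$ (which exists by the case $k=2$), leaving all other edges unchanged. Only the colour-$i$ and colour-$j$ summands of $\Phi$ change, and by the strict convexity of $\binom{\cdot}{2}$ a $2$-colouring of $H_{ij}$ minimises each of those three parts — hence their sum — if and only if it is balanced, equitable, and equalized; since the old induced $2$-colouring of $H_{ij}$ was not, $\Phi$ strictly decreases. As $\Phi$ is a nonnegative integer the procedure halts, necessarily at a balanced, equitable, equalized $k$-edge-colouring.

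\emph{Main obstacle.} The crux is the case $k=2$, and within it the simultaneous control of all three notions of balance — in particular the global (``equalized'') condition, which de Werra's original argument does not explicitly produce. The alternating Euler-circuit colouring is automatically vertex-equitable, but keeping the global counts under control after the auxiliary vertices are removed relies on the point that $x$ and $y$ are given even degree precisely so that the deleted edge set is colour-balanced; threading pair-balance, vertex-balance, and global balance honestly through the grouped-pairs reduction, the Eulerization, and the final deletion is the bookkeeping the proof must get right. (One could instead prove $k=2$ by repairing an arbitrary $2$-colouring along alternating walks, but the Euler-circuit construction above seems cleaner.)
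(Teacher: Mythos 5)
Your proof is correct, but note that the paper does not actually prove Theorem \ref{BEE}: it is stated as a known consequence of de Werra's work, with only the remark that his techniques ``easily allow'' the coloring to be equalized as well. Your argument is a complete, self-contained instantiation of exactly that technique: the pairing-off of parallel edges reduces pair-balance to triviality, the Euler-circuit alternation on the parity-corrected leftover graph handles equitability and equalizedness for $k=2$, and the convexity potential $\Phi$ lifts the result to general $k$ (correctly using that the $k=2$ case supplies a coloring minimizing all three convex parts of $\Phi$ simultaneously, so that a non-optimal pair $i,j$ forces a strict decrease). The only step you use without justification is that the auxiliary vertices $x$ and $y$ can both be made even by adding at most the single edge $xy$; this needs the observation that the number of odd-degree vertices of $G'$ in $A$ and the number in $B$ have the same parity (both are congruent to $|E(G')|$ modulo $2$ since $G'$ is bipartite), so $\deg x$ and $\deg y$ are either both even or both odd. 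With that one line added, your proof is a clean and complete substitute for the citation, and it makes explicit the ``equalized'' bookkeeping that the paper only asserts.
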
  
If $G$ is a $k$-edge-colored graph, and if $u,v\in V(G)$ and $A, B\subset V(G)$, with $A\cap B=\varnothing$ then  $\ell(u)$ denotes the number of loops incident with vertex $u$, $d(u)$ denotes the degree of vertex $u$ (loops are considered to contribute two to the degree of the incident vertex), and $m(A,B)$ denotes the total number of edges joining vertices in $A$ to vertices in $B$. We refer to  $m(A,B)$ as the \textit{multiplicity} of pair $A, B$, naturally generalizing the multiplicity $m(u,v)$ of a pair of vertices $u,v$ as used in \cite{GrBerge}. In particular by $m(u,A)$ we mean $m(\{u\},A)$. If $G_1,G_2$ are subgraphs of $G$ with $V(G_1)=A$ and  $V(G_2)=B$, then we let $m(G_1,G_2)$ denote $m(A,B)$, and  $m(u, G_1)$ denote $m(\{u\}, A)$.

The subgraph of $G$ induced by the edges colored $j$ is denoted by $G(j)$, and $\omega (G)$ is the number of components of $G$. The neighborhood of vertex $v$, written $N(v)$, denotes the set of all vertices adjacent to $v$ (not including $v$).

\section{Main Theorem} \label{mainthsec}
The main theorem below describes some strong properties that can be guaranteed to be satisfied by some detachment $G$ of a given edge-colored graph $H$. Condition (A1) addresses the issue of $P$-almost-regularity (where $P$ is a partition of $V(G)$), while conditions (A3) and (A5) address the equimultiplicity issue in $G$. Conditions (A1), (A3) and (A5) have  companion conditions (A2), (A4) and (A6),  respectively, that restricts  the graphs considered to the color classes of $G$. Condition (A7) addresses the connectivity issue of each color class of $G$.  
\begin{theorem}  \label {mainth}
Let $H$ be a $k$-edge-colored graph and let $\e$ be a function from $V(H)$ into $\mathbb{N}$ such that for each $w \in V(H)$, $\e (w) = 1$ implies $\ell_H (w) = 0$. Then there exists a loopless $\e$-detachment $G$ of $H$ with amalgamation function $\psi:V(G)\rightarrow V(H)$, $\e$  being the number function associated with $\psi$, such that $G$ satisfies the following conditions:
\begin{itemize}
\item [\textup{(A1)}] $d_G(u) \approx d_H(w)/\e (w)  $ for each $w\in V(H)$ and each $u\in \psi^{-1}(w);$
\item [\textup{(A2)}] $d_{G(j)}(u) \approx d_{H(j)}(w)/\e (w)  $ for each $w\in V(H)$, each $u\in \psi^{-1}(w)$ and each $j\in \mathbb{Z}_k;$
\item [\textup{(A3)}] $m_G(u, u') \approx \ell_H(w)/\binom {\e(w)}{2} $ for each $w\in V(H)$ with $\e (w) \geq 2$ and every pair of distinct vertices $u,u'\in \psi^{-1}(w);$
\item [\textup{(A4)}] $m_{G(j)} (u, u') \approx  \ell_{H(j)}(w)/\binom{\e (w)}{2}  $ for each $w\in V(H)$ with $\e (w) \geq 2$, every pair of distinct vertices $u,u'\in \psi^{-1}(w)$ and each $j\in \mathbb{Z}_k;$
\item [\textup{(A5)}] $m_G(u, v) \approx m_H(w, z)/(\e (w) \e (z)) $ for every pair of distinct vertices $w,z\in V(H)$, each $u\in \psi^{-1}(w)$ and each $v\in \psi^{-1}(z);$
\item [\textup{(A6)}] $m_{G(j)}(u, v) \approx m_{H(j)}(w, z)/(\e (w)\e (z))  $ for every pair of distinct vertices $w,z\in V(H)$, each $u\in \psi^{-1}(w)$, each $v\in \psi^{-1}(z)$ and each $j\in \mathbb{Z}_k;$
\item [\textup{(A7)}] If for some $j\in \mathbb{Z}_k$, $d_{H(j)}(w)/\e (w)$ is an even integer for each $w \in V(H)$, then $\omega(G(j)) = \omega(H(j))$.
\end{itemize}
\end{theorem}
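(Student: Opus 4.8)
The plan is to prove the theorem by induction on $\sum_{w\in V(H)}(\e(w)-1)$, the total number of ``extra'' vertices that must be created. The base case is $\sum_{w}(\e(w)-1)=0$, i.e. $\e\equiv 1$; then $H$ is already loopless (by hypothesis) and $G=H$ works trivially, with all the $\approx$ conditions holding as equalities and (A7) being vacuous. For the inductive step, I would pick a vertex $\alpha\in V(H)$ with $\e(\alpha)\ge 2$ and perform a \emph{single detachment}: split off one new vertex $\beta$ from $\alpha$, obtaining an intermediate graph $H'$ on $V(H)\cup\{\beta\}$ with $\e'(\alpha)=\e(\alpha)-1$, $\e'(\beta)=1$, and $\e'(w)=\e(w)$ otherwise. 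Applying the induction hypothesis to $H'$ (which has a strictly smaller sum) yields the desired detachment $G$ of $H'$, hence of $H$; the main work is to choose the edges at $\beta$ (how many loops at $\alpha$ become edges $\alpha\beta$, how many edges from $\alpha$ to each other vertex get redirected to $\beta$, and how the colors are distributed) so that $H'$ itself satisfies enough ``balance at $\beta$'' to make the composed detachment satisfy (A1)--(A7) for $H$.

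The key tool for choosing the edges at $\beta$ is Theorem \ref{BEE}. I would build an auxiliary bipartite graph $B$ encoding the edges incident with $\alpha$ in $H$: one side has a vertex for the ``loop slots'' at $\alpha$ and a vertex for each $z\in V(H)\setminus\{\alpha\}$, weighted by the relevant multiplicities $\ell_H(\alpha)$ and $m_H(\alpha,z)$; the choice of which of these edges move to $\beta$ corresponds to an \emph{equitable, balanced, equalized} edge-coloring of $B$ (or a suitable variant) into $\e(\alpha)$ ``parts,'' one of which is $\beta$. The balanced/equitable/equalized properties translate exactly into: the degree of $\beta$ in $H'$ is $\approx d_H(\alpha)/\e(\alpha)$ and likewise in each color class (feeding (A1), (A2)); the number of $\alpha\beta$-edges is $\approx \ell_H(\alpha)/\binom{\e(\alpha)}{2}$, similarly per color (feeding (A3), (A4)); and $m_{H'}(\beta,z)\approx m_H(\alpha,z)/\e(\alpha)$, similarly per color (feeding (A5), (A6)). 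One must be careful that these single-step ``$\approx$'' statements, when iterated over all the splits of $\alpha$ and combined with the induction hypothesis applied to $H'$, actually yield the claimed ``$\approx$'' against $d_H(w)/\e(w)$ and so on; this is where properties (i)--(vi) of $\approx$ from Section \ref{term} (especially transitivity, averaging (iv), and the scaling rule (ii)) are invoked repeatedly. A small subtlety: after creating $\beta$ the induced subgraph on $\psi^{-1}(\alpha)$ in the \emph{final} graph must be equimultiple, which forces the number of $\alpha\beta$-edges to match what the induction hypothesis will later produce among $\psi^{-1}(\alpha)$; handling this consistently is the most delicate bookkeeping in the argument.

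For the connectivity condition (A7): assume $d_{H(j)}(w)/\e(w)$ is an even integer for every $w$. The single-detachment step should be done so that, in color $j$, the new vertex $\beta$ receives exactly $d_{H(j)}(\alpha)/\e(\alpha)$ edges (an even number, by hypothesis) and, crucially, $\beta$ remains in the same component of $H'(j)$ as $\alpha$ — this is arranged by insisting that at least one of the edges moved to $\beta$ in color $j$ be an $\alpha\beta$-edge (a former loop) whenever $\ell_{H(j)}(\alpha)$ is large enough, or otherwise by the standard Eulerian-style argument that lets one reroute a closed walk through $\beta$ without disconnecting. Because $\beta$ gets even degree in color $j$ and stays connected to $\alpha$, $\omega(H'(j))=\omega(H(j))$; iterating gives $\omega(G(j))=\omega(H(j))$. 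The main obstacle, and the heart of the proof, is the simultaneous satisfaction of \emph{all} of (A1)--(A7) in the single step: the edge-coloring of the auxiliary bipartite graph must be chosen to be balanced, equitable \emph{and} equalized at once while also respecting the ``one loop-edge per nontrivial color class'' requirement for (A7), and one has to verify that Theorem \ref{BEE} (or a short strengthening of it obtained by a further application to an auxiliary bipartite graph) delivers a coloring with all these properties simultaneously. Once that single step is in hand, the induction and the $\approx$-calculus close the argument.
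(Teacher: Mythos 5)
Your overall strategy is the paper's: iterate single detachments, one new vertex at a time, using de Werra's theorem (Theorem \ref{BEE}) on an auxiliary bipartite graph to decide which edges at $\alpha$ migrate to $\beta$, and then close the induction with the calculus of $\approx$. For (A1)--(A6) this matches the paper's construction of the graphs $B_i$ and the conditions (P1)--(P4), (B1)--(B6), (C1)--(C8); one detail you leave vague but that matters is the bipartition of the auxiliary graph: the paper puts a vertex $c_{ij}$ for each \emph{color} $j$ on one side and $N_{H_i}(y)\cup\{{\cal L}_i\}$ on the other, which is exactly what makes a single equalized--equitable--balanced $\e(\alpha)$-coloring control the total degree, the per-color degrees, the per-neighbor multiplicities and the per-color-per-neighbor multiplicities simultaneously. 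Your ``delicate bookkeeping'' worry about the $\alpha\beta$-edges versus the future edges inside $\psi^{-1}(\alpha)$ is handled by the paper's (B1) and (B5)(iii) and is not a real obstruction.

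The genuine gap is (A7), and you have in fact pointed at it yourself without resolving it. Ensuring that $\beta$ lands in the same $j$-component as $\alpha$ is indeed sufficient, but your two proposed mechanisms do not deliver it: if $\ell_{H(j)}(\alpha)=0$ there is no loop to convert into an $\alpha\beta$-edge, and an unqualified balanced/equitable coloring of the auxiliary graph can move \emph{all} of the edges joining $\alpha$ to some component $\Gamma$ of $H(j)\setminus\{\alpha\}$ over to $\beta$ (balancedness only constrains each pair $\{\alpha,v\}$ separately, so e.g.\ with $\e(\alpha)=2$ and two vertices of $\Gamma$ each joined to $\alpha$ by one edge, both edges may be recolored into $\beta$'s class), after which $\beta\cup\Gamma$ can split off as a new component. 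The ``standard Eulerian-style argument'' you invoke is not compatible as stated with keeping all of (A1)--(A6). The paper's fix is a second stage: using that $d_{H(j)}(w)/\e(w)$ is an even integer for all $w$, each cut $m_{H_i(j)}(y,\Gamma)$ is even, and each color vertex $c_{ij}$ is split into $\alpha_{ij}=d_{H_i(j)}(y)/\e_i(y)$ vertices of degree $2$ whose two incident edges are, as far as possible, paired within a single component $\Gamma$ (conditions (M1)--(M2)); an equitable $2$-coloring of the resulting graph $B_i'$ then sends exactly one edge of each such pair to $\beta$, so at most half --- in particular not all --- of the edges from $\alpha$ to each $\Gamma$ migrate, every $\Gamma$ stays attached to $\alpha$, and $\beta$ is attached either to $\alpha$ directly or through some $\Gamma$. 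This pairing device is the missing idea in your proposal.
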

\begin{remark} \textup {
All existing results in \cite{H2, HR, LR1, LR3, RW} study amalgamations for complete graphs or complete multipartite graphs. In these papers, 
Theorem 1, Theorem 1, Theorem 3.1, Theorem 2.1, and Theorem 2.1 respectively are all immediate corollaries of Theorem \ref{mainthsec}. Other results in the literature may have another focus, most notably in \cite{MatJohns, N, RW} where the edge-connectivity of each color class is specified; such results are not generalized by Theorem \ref{mainthsec}.  
}\end{remark}
\begin{proof} Let $H = (V, E)$ and let $n = \sum\limits _{v \in V} (\e (v) -1)$. 
Our proof consists of the following major parts. First we shall describe the construction of a sequence of graphs $H_0=H, H_1,\ldots, H_n$, where $H_{i}$ is an amalgamation of $H_{i+1}$ (so $H_{i+1}$ is a detachment of $H_i$) for $0\leq i\leq n-1$ with amalgamation function $\psi_i$ that combines a vertex with amalgamation number 1 with one other vertex.  
 To construct each $H_{i+1}$ from $H_i$ we will use two bipartite graphs $B_i, B'_i$. Then we will observe some properties of $B'_i$. We will show that  these properties will impose conditions on $H_{i+1}$ in terms of $H_i$. The relations between $H_{i+1}$ and $H_i$ lead to conditions relating each $H_i$, $1\leq i\leq n$ to the initial graph $H$. This will then show that $H_n$ satisfies the conditions (A1)-(A7), so we can let $G=H_n$.  

Initially we let $H_0=H, \e_0=\e$, and we let $\psi_0$ be the identity function from $V$ into  $V$. Now assume that $H_0=(V_0,E_0),\ldots,H_i=(V_i,E_i)$ and $\psi_0,\ldots,\psi_i$ have been defined for some $i\geq 0$. Also assume that $\e_0:V_0\rightarrow\mathbb{N},\ldots, \e_i:V_i\rightarrow\mathbb{N}$ have been defined for some $i\geq 0$ such that  for each $j=0,\ldots,i$ and each $y \in V_j$, $\e_j (y) = 1$ implies $\ell_{H_j} (y) = 0$.  Let $\varphi_i=\psi_0\ldots\psi_i$. If $i=n$, we terminate the construction, letting $G=H_n$ and  $\psi=\varphi_n$. Otherwise, we can select a vertex $y$ of $H_i$ such that $\e_i(y)\geq 2$. $H_{i+1}$ is formed from $H_i$ by detaching a vertex $v_{i+1}$ with amalgamation number 1 from $y$. 

To decide which edge (and loop) to detach from $y$ and to move to $v_{i+1}$, we construct two sequences of bipartite graphs $B_0, \dots, B_{n-1}$ and $B_0', \dots, B_{n-1}'$ together with a sequence $F_0, F_1, \dots, F_{n-1}$ of sets of edges (possibly including loops) with $F_i\subset E(B_i')$ for $i=0,\ldots,n-1$; each edge in $F_i$ corresponds to an edge in $H_i$ which will have one end detached from $y$ and joined to $v_{i+1}$ when forming $H_{i+1}$. 

Let $c_{i1}, \dots, c_{ik}$ and ${\cal L}_i$ be distinct vertices which do not belong to $V_i$. 
Let $B_i$ be a bipartite graph whose vertex bipartition is $\{Q_i, W_i\}$, where  

$$
Q_i = \{c_{i1}, \dots, c_{ik}\} \mbox{ and }  W_i= N_{H_i}(y) \cup \{{\cal L}_i\},
$$
and whose edge set is
$$E(B_i)=\Big(\bigcup\limits_{\scriptstyle \{y,u\}\in E(H_i(j)) \hfill \atop \scriptstyle y\neq u }\{ \{c_{ij},u\}\}\Big) \bigcup  \Big(\bigcup\limits_{\{y,y\}\in E(H_i(j))}\{ \{c_{ij},{\cal L}_i\},\{c_{ij},{\cal L}_i\}\}\Big).$$
Intuitively speaking, for each color $j \in \mathbb{Z}_k$ and each vertex $u \in W_i\backslash \{{\cal L}_i\}$ an edge is placed between $c_{ij}$ and $u$ in $B_i$ for each edge in $H_i(j)$ joining $y$ to $u$. Moreover, two edges are placed between $c_{ij}$ and ${\cal L}_i$ in $B_i$ for each loop incident with $y$ in $H_i(j)$. This is shown in Figure \ref{figure:HiBi}.
\begin{figure}[htbp]
\begin{center}
\scalebox{.80}
{ \includegraphics {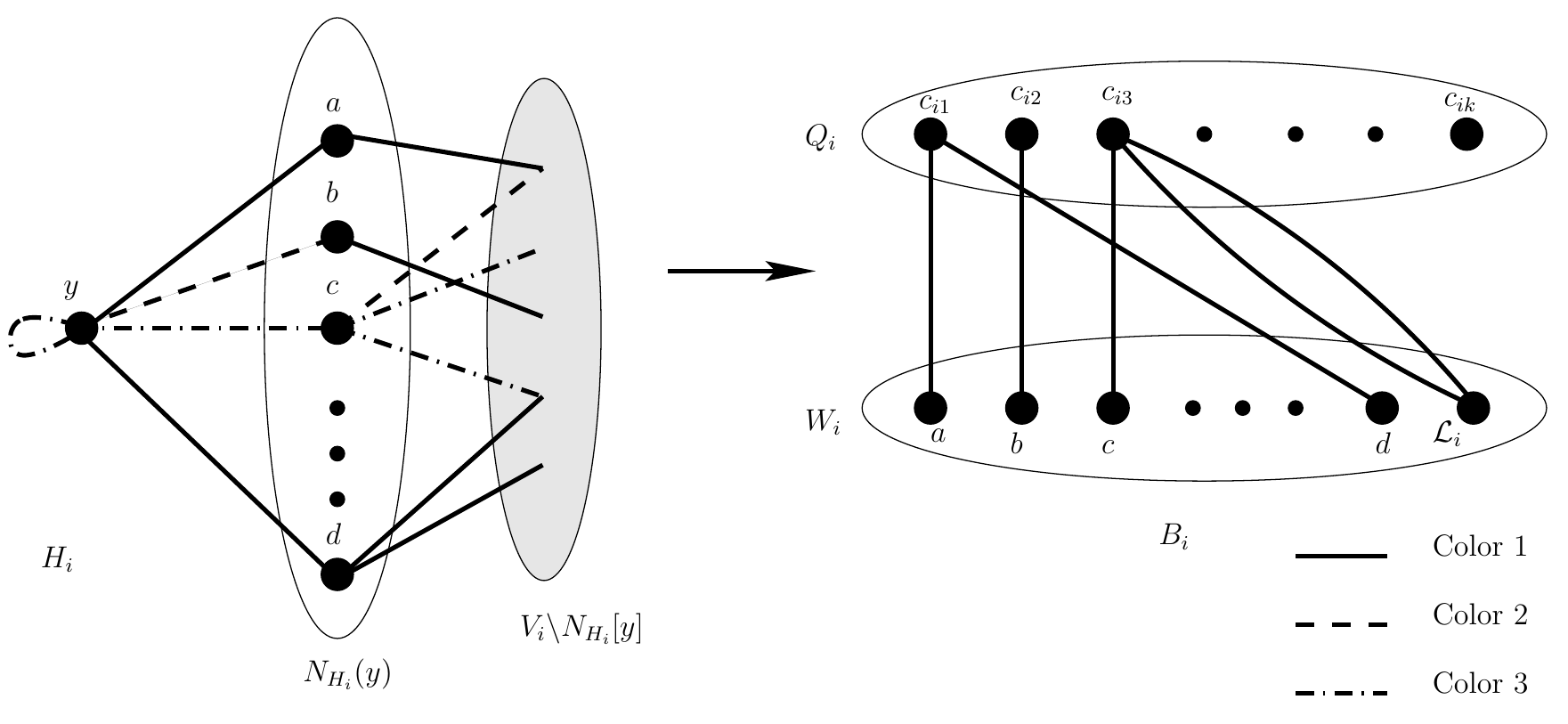} }

\caption{Construction of $B_i$ from $H_i$}
\label{figure:HiBi}
\end{center}
\end{figure}

For $B_i$ we have
\begin{equation}\label{dbiv}
d _{B_i}(v) = \left \{ \begin{array}{ll}
d_{H_i(j)}(y) & \mbox { if } v = c_{ij}   \mbox{ for some } j\in \mathbb{Z}_k\\
2\ell_{H_i}(y) & \mbox { if } v = {\cal L}_i\\ 
m_{H_i}(y,v) & \mbox { otherwise.} \end{array} \right. 
\end{equation}

By Theorem \ref{BEE} we can give $B_i$ an equalized, equitable and balanced $\e _i(y)$-edge-coloring ${\cal K}_i$. Since ${\cal K}_i$ is equitable, for each $1\leq r \leq \e_i(y)$, we have
\begin{equation}\label{dbivr}
d _{B_i(r)}(v) \approx \left \{ \begin{array}{ll}
d_{H_i(j)}(y)/\e_i(y) & \mbox { if } v = c_{ij}   \mbox{ for some } j\in \mathbb{Z}_k\\ 
{2\ell_{H_i}(y)/\e_i(y)} & \mbox { if } v = {\cal L}_i\\ 
{m_{H_i}(y,v)/\e_i(y)} & \mbox { otherwise.}
 \end{array} \right. 
\end{equation}

Now let $T_i$ be formed by a subgraph of $B_i$ induced by the edges colored 1 and 2. Since $\e _i(y) \geq 2$, this is always possible. For each color $j \in \mathbb{Z}_k$ for which 
\begin{equation}\label{concond} \mbox{ for all } v\in V_i, d_{H_i(j)}(v)/\e _i(v) \mbox{ is an even integer, }\end{equation}
 define $\alpha _{ij}  = d_{H_i(j)} (y)/\e_i(y)$. By \eqref{dbivr} for each color class $r$ of ${\cal K}_i$, $d _{B_i(r)}(c_{ij}) \approx d_{H_i(j)}(y)/\e_i(y)$. Therefore since two color classes of ${\cal K}_i$ are chosen to form $T_i$, if \eqref{concond} is satisfied, then  $d_{T_i}(c_{ij})=2 d_{H_i(j)} (y)/\e_i(y)=2\alpha_{ij}$. 

Let $B_i'$ be the bipartite graph whose vertex bipartition is $\{Q_i', W_i\}$, obtained by splitting all the vertices $c_{ij}$ in $T_i$ for each $j\in \mathbb{Z}_k$ for which condition \eqref{concond} holds, into $\alpha _{ij}$ vertices $c_{i,j, 1}, \dots, c_{i,j, \alpha _{ij}}$ all of degree $2$ as described in (M1)-(M2) below. (We don't split vertices $c_{ij}$ in $T_i$ for $j\in \mathbb{Z}_k$ for which condition \eqref{concond} does not hold; but they and their incident edges remain in $B_i'$.)
\begin{itemize}
\item[(M1)] First, as many of $c_{i,j,t}$'s $1\leq t\leq \alpha _{ij}$ as possible are joined by $2$ edges to the same vertex in $W_i$;
\item[(M2)] Then, among all $c_{i,j,t}$'s $1\leq t\leq \alpha _{ij}$  with valency less than 2, as many of them as possible are incident with two edges that correspond to edges in $H_i(j)$ that join $y$ to vertices that are both in the same component of $H_i(j) \backslash \{y\}$.
\end{itemize} 
For each $j\in \mathbb{Z}_k$ that satisfies condition \eqref{concond}, we let ${\cal C}_{ij}=\bigcup\limits_{t=1}^{\alpha_{ij}}{\{c_{i,j,t}\}}$. Otherwise, we let  ${\cal C}_{ij}=\{c_{ij}\}$. 
By Theorem \ref{BEE}, we can give $B_i'$ an equalized, equitable and balanced $2$-edge-coloring ${\cal K}_i'$.  This gives us two color classes either of which can be chosen to be $F_i$, say the edges colored $1$ are chosen. Since ${\cal K}_i'$ is equitable, we have
\begin{equation}\label{dbpiv1}
d _{B_i'(1)}(v)  \approx \left \{ \begin{array}{ll}
d_{H_i(j)}(y)/\e_i(y) & \mbox { if } v = c_{ij}   \mbox{ for } j\in \mathbb{Z}_k \mbox{ for which }\eqref{concond} \mbox{ does not hold }\\ 
 1 & \mbox { if } v \in {\cal C}_{ij}\mbox{ for } j\in \mathbb{Z}_k \mbox{ for which }\eqref{concond} \mbox{  holds }\\ 
 {2\ell_{H_i}(y)/\e_i(y)} & \mbox { if } v = {\cal L}_i\\ 
 {m_{H_i}(y,v)/\e_i(y)} & \mbox { otherwise.}
 \end{array} \right. 
\end{equation}
Now we let 
$$A_{ij}=\Big(\bigcup\limits_{\scriptstyle\{c,v\}\in F_i \atop   {\scriptstyle c\in{\cal C}_{ij}  \atop   \hfill}}  {\{y,v\} } \Big)\bigcup \Big(\bigcup\limits_{\scriptstyle\{c,{\cal L}_i\}\in F_i \atop   {\scriptstyle c\in {\cal C}_{ij} \atop   \hfill}}  {\{y,y\} } \Big)$$
and 
$$B_{ij}= \Big(\bigcup\limits_{\scriptstyle\{c,v\}\in F_i \atop   {\scriptstyle c\in{\cal C}_{ij}  \atop   \hfill}}  {\{v_{i+1},v\} } \Big )\bigcup \Big(\bigcup\limits_{\scriptstyle\{c,{\cal L}_i\}\in F_i \atop   {\scriptstyle c\in{\cal C}_{ij}\atop   \hfill}}  {\{v_{i+1},y\} } \Big ),$$
where $v_{i+1}$ is a vertex which does not belong to $V_i$. Let $V_{i+1}=V_i\cup\{v_{i+1}\}$, and let $\psi _{i+1}$ be a function from $V_{i+1}$ onto $V_i$ such that 
$$
\psi _{i+1} (v) = \left \{ \begin{array}{ll}
y & \mbox { if } v = v _{i+1}  \\
v & \mbox { otherwise.} 
\end{array} \right. 
$$
Let $H_{i+1}=(V_{i+1},E_{i+1})$ be the $\psi _{i+1}$-detachment of $H_i$ such that for each $j\in \mathbb Z_k$ $$E(H_{i+1}(j))=(E(H_i(j))\backslash A_{ij}) \cup B_{ij},$$ and $E_{i+1}=\bigcup_{j=1}^{k}E(H_{i+1}(j))$.

Intuitively speaking, $H_{i+1}$ is formed as follows. Each edge $\{c, v\}\in F_i$ with $c\in{\cal C}_{ij} $ and $v \in W_i\backslash \{{\cal L}_i\}$ directly corresponds to an edge $\{y, v\}$ in $H_i(j)$;  replace $\{y,v\}$ with the edge $\{v, v_{i+1}\}$ colored $j$ in $H_{i+1}$. So in forming $H_{i+1}(j)$ from $H_i(j)$ the end of this edge is detached from $v$ and joined to the new vertex $v_{i+1}$ instead. Moreover, we remove $m_{B_i'(1)}({\cal C}_{ij},{\cal L}_i)$ loops colored $j$ incident with $y$ in $H_i$ and we replace them with $m_{B_i'(1)}({\cal C}_{ij},{\cal L}_i)$ edges colored $j$ joining $y$ to $v_{i+1}$ in $H_{i+1}$. Note that since ${\cal K}_i'$ is balanced, $\e _i (y) \geq 2$ and $\lceil d_{B_i'} ({\cal L}_i)/2 \rceil \leq \lceil d_{B_i} ({\cal L}_i)/2 \rceil = \ell_{H_i}(y)$, at most half of the edges in $B_i'$ incident with ${\cal L}_i$ are colored $1$, so there are indeed $m_{B_i'(1)}({\cal C}_{ij},{\cal L}_i)$ loops incident with $y$ in $H_i$ (recall that each loop in $H_i$ corresponds to two edges in $B_i'$).

Obviously, $\psi_{i+1}$ is an amalgamation function from $H_{i+1}$ into $H_i$. Let $\e _{i+1}$ be the function from $V_{i+1}$ into $\mathbb N$ such that
$$
\e _{i+1}(v) = \left \{ \begin{array}{ll}
1 & \mbox { if } v = v_{i+1}\\
\e _i(v) -1 & \mbox { if } v = y\\ 
\e _i(v) & \mbox { otherwise.} \end{array} \right. \\ 
$$

We now check that $B_i'$, described above, satisfies the following conditions for each color $j\in \mathbb{Z}_k:$
\begin{itemize}
\item [\textup{(P1)}] $m_{B_i'(1)}({\cal C}_{ij},{\cal L}_i)\approx 2\ell_{H_i(j)}(y)/\e_i(y);$ 
\item [\textup{(P2)}] $m_{B_i'(1)}({\cal C}_{ij},v)\approx  m_{H_i(j)}(y,v)/\e_i(y)$ for each $v\in W_i\backslash\{{\cal L}_i\};$
\item [\textup{(P3)}] $m_{B_i'(1)}(Q_i',W_i)\approx   d_{H_i}(y)/\e_i(y);$
\item [\textup{(P4)}] $m_{B_i'(1)}({\cal C}_{ij},W_i)\approx  d_{H_i(j)}(y)/\e_i(y).$
\end{itemize}
 In order to prove (P1) and (P2) first we show that 
 $$m_{B_i'(1)}({\cal C}_{ij},v)\approx \frac{m_{B'_i}({\cal C}_{ij},v)}{2}  \mbox{  for each } v\in W'_i .$$ 
 There are two cases:
\begin{itemize}
\item Case 1: ${\cal C}_{ij}=\{c_{ij}\}$. Since ${\cal K}_i'$ is balanced, $$m_{B_i'(1)}({\cal C}_{ij},v)=m_{B_i'(1)}(c_{ij},v)\approx \frac{m_{B'_i}(c_{ij},v)}{2}=\frac{m_{B_i'}({\cal C}_{ij},v)}{2}.$$ 
\item Case 2: ${\cal C}_{ij}=\bigcup\limits_{t=1}^{\alpha_{ij}}{\{c_{i,j,t}\}}$. By (M1), among all vertices in ${\cal C}_{ij}$, there are exactly $\lfloor m_{B_i'}({\cal C}_{ij},v)/2 \rfloor$ vertices of degree 2 which are joined to $v$ (at most one vertex in ${\cal C}_{ij}$ is joined to $v$ by one edge). Since ${\cal K}_i'$ is balanced(or equitable), among these vertices of degree 2, exactly one of them is joined to $v$ by an edge colored 1. Therefore
$$m_{B_i'(1)}({\cal C}_{ij},v)=\sum\limits_{t = 1}^{\alpha _{ij} } m_{B_i'(1)}(c_{i,j,t},v)\approx\frac{m_{B_i'}({\cal C}_{ij},v)}{2}.$$
\end{itemize} 
Clearly $m_{B_i'}({\cal C}_{ij},v)=m_{T_i}(c_{ij},v)=m_{B_i(1)}(c_{ij},v)+m_{B_i(2)}(c_{ij},v)$. If $v={\cal L}_i$, from the definition of $B_i$ it follows that $m_{B_i}(c_{ij},{\cal L}_i)=2\ell_{H_i(j)}(y)$. Since ${\cal K}_i$ is balanced, for each $1\leq r \leq \e_i(y)$ we have $m_{B_i(r)}(c_{ij},{\cal L}_i)\approx 2\ell_{H_i(j)}(y)/\e_i(y)$. Therefore 
$$m_{B_i'(1)}({\cal C}_{ij},{\cal L}_i)  \approx  \frac{m_{B_i'}({\cal C}_{ij},{\cal L}_i)}{2}= \frac{m_{T_i}(c_{ij},{\cal L}_i)}{2}\approx  \frac{2\ell_{H_i(j)}(y)}{\e_i(y)}.$$
This proves (P1).

Now let $v\in W_i\backslash\{{\cal L}_i\}$.  From the definition of $B_i$ it follows that $m_{B_i}(c_{ij},v)=m_{H_i(j)}(y,v)$. Since ${\cal K}_i$ is balanced, for each $1\leq r \leq \e_i(y)$ we have $m_{B_i(r)}(c_{ij},v)\approx m_{H_i(j)}(y,v)/\e_i(y)$. Therefore 
$$m_{B_i'(1)}({\cal C}_{ij},v)  \approx  \frac{m_{B_i'}({\cal C}_{ij},v)}{2}= \frac{m_{T_i}(c_{ij},v)}{2}\approx  \frac{m_{H_i(j)}(y,v)}{\e_i(y)}.$$
This proves (P2). 

Since ${\cal K}_i'$ is equalized, $m_{B_i'(1)}(Q_i',W_i)=|E(B_i'(1))|\approx m_{B'_i}(Q_i',W_i)/2$. Clearly $m_{B_i'}(Q_i',W_i)=|E(B_i')|=m_{T_i}(Q_i,W_i)=m_{B_i(1)}(Q_i,W_i)+m_{B_i(2)}(Q_i,W_i)$. From the definition of $B_i$ it follows that $m_{B_i}(Q_i,W_i)=|E(B_i)|=d_{H_i}(y)$. Since ${\cal K}_i$ is equalized, for each $1\leq r \leq \e_i(y)$ we have $m_{B_i(r)}(Q_i,W_i)=|E(B_i(r))|\approx d_{H_i}(y)/\e_i(y)$. Therefore 
$$m_{B_i'(1)}(Q_i',W_i)  \approx  \frac{m_{B'_i}(Q_i',W_i)}{2}= \frac{m_{T_i}(Q_i,W_i)}{2}\approx \frac{d_{H_i}(y)}{\e_i(y)}.$$
This proves (P3).
 
In order to prove (P4), there are two cases:
\begin{itemize}
\item Case 1: ${\cal C}_{ij}=\{c_{ij}\}$. From \eqref{dbpiv1} it follows that 
$$m_{B_i'(1)}({\cal C}_{ij},W_i)  = m_{B_i'(1)}(c_{ij},W_i)= d_{B'_i(1)}(c_{ij})\approx \frac{d_{H_i(j)}(y)} {\e_i(y)}.$$
\item Case 2: ${\cal C}_{ij}=\bigcup\limits_{t=1}^{\alpha_{ij}}{\{c_{i,j,t}\}}$. In this case $m_{B_i'(1)}({\cal C}_{ij},W_i) = \sum\limits_{t = 1}^{\alpha _{ij} } m_{B_i'(1)}(c_{i,j,t},W_i)$. From \eqref{dbpiv1} it follows that
$$m_{B_i'(1)}({\cal C}_{ij},W_i)  =  \sum\limits_{t = 1}^{\alpha _{ij} }1= \alpha_{ij}=\frac{d_{H_i(j)}(y)} {\e_i(y)}.$$
\end{itemize}
This proves (P4).


Most of the conditions that $H_{i+1}$ must satisfy, are numerical, and we consider them first. The reader who is more interested in the connectivity issue, namely property (A7), may wish to jump to the consideration of conditions (D1)-(D2) on the last two pages of this section.  

Using \eqref{dbpiv1} and (P1)-(P4), now we show that $H_{i+1}$, described above, satisfies the following conditions:
\begin{itemize}

\item [\textup{(B1)}]  $\ell _{H_{i+1}}(y) \approx  \ell _{H_i}(y)(\e_{i+1} (y) - 1)/\e _i(y);$
\item [\textup{(B2)}]  $\ell _{H_{i+1}(j)}(y) \approx   \ell _{H_i(j)}(y)(\e_{i+1} (y) - 1)/\e _i(y)$ for each $j\in \mathbb{Z}_k;$
\item [\textup{(B3)}]
\begin{itemize}
\item  [\textup{(i)}] $d_{H_{i+1}}(y)/\e_ {i+1}(y) \approx   d_{H_i}(y)/\e _i(y),$ 
\item  [\textup{(ii)}] $d_{H_{i+1}}(v_{i+1}) \approx  d_{H_i}(y)/\e_i(y);$
\end{itemize}
\item [\textup{(B4)}]For each $j\in \mathbb{Z}_k$ 
\begin{itemize}
\item  [\textup{(i)}] $d_{H_{i+1}(j)}(y)/ \e_ {i+1}(y) \approx  d_{H_i(j)}(y)/\e _i(y),$ 
\item  [\textup{(ii)}] $d_{H_{i+1}(j)}(v_{i+1}) \approx  d_{H_i(j)}(y)/\e_i(y);$
\end{itemize}
\item [\textup{(B5)}] For each $v\in N_{H_i}(y)$
\begin{itemize}
\item  [\textup{(i)}]  $m_{H_{i+1}} (y, v)/ \e_ {i+1}(y) \approx m_{H_i} (y, v) /\e _i(y),$
\item  [\textup{(ii)}] $m_{H_{i+1}} (v_{i+1}, v) \approx  m_{H_i} (y, v) /\e _i(y),$
\item  [\textup{(iii)}] $m_{H_{i+1}}(y, v_{i+1})/\e_{i+1}(y) \approx \ell_{H_i}(y)/\binom{\e _i(y)}{2};$
\end{itemize}
\item [\textup{(B6)}] For each $v\in N_{H_i}(y)$, and each $j\in \mathbb{Z}_k$
\begin{itemize}
\item  [\textup{(i)}]  $m_{H_{i+1}(j)} (y, v) /\e_ {i+1}(y) \approx  m_{H_i}(j) (y, v)/\e _i(y),$
\item  [\textup{(ii)}] $m_{H_{i+1}(j)} (v_{i+1}, v) \approx   m_{H_{i(j)}} (y, v) /\e _i(y),$
\item  [\textup{(iii)}] $m_{H_{i+1}(j)}(y, v_{i+1})/\e_{i+1}(y) \approx  \ell_{H_i(j)}(y)/\binom{\e _i(y)}{2}.$
\end{itemize} 
\end{itemize}
Note that $\e_{i+1}(y)=\e_i(y)-1$. Let us fix $v\in N_{H_i}(y)$, and $j\in \mathbb{Z}_k$. 

From the construction of $H_{i+1}$, we have $ \ell _{H_{i+1}}(y)= \ell _{H_{i}}(y)-d_{B_i'(1)}({\cal L}_i)$. By \eqref{dbpiv1}, $d_{B_i'(1)}({\cal L}_i)\approx  2\ell_{H_i}(y)/\e_i(y)$. Hence 

 $$ \ell_{H_{i+1}}(y)  \approx   \ell _{H_{i}}(y)-\frac{2\ell_{H_i}(y)}{\e_i(y)}= \frac{ \ell _{H_i}(y)(\e_{i} (y) - 2)}{\e _i(y)} = \frac{ \ell _{H_i}(y)(\e_{i+1} (y) - 1)}{\e _i(y)}.$$

 This completes the proof of (B1).
 
Clearly, $\ell _{H_{i+1}(j)}(y)= \ell _{H_{i}(j)}(y)-m_{B_i'(1)}({\cal C}_{ij},{\cal L}_i)$. By (P1), $m_{B_i'(1)}({\cal C}_{ij},{\cal L}_i)\approx 2\ell_{H_i(j)}(y)/\e_i(y)$. Hence 

$$ \ell _{H_{i+1}(j)}(y) \approx  \ell _{H_{i}(j)}(y)-\frac{2\ell_{H_i(j)}(y)}{\e_i(y)}= \frac {\ell _{H_i(j)}(y)(\e_{i} (y) - 2)}{\e _i(y)} =  \frac {\ell _{H_i(j)}(y)(\e_{i+1} (y) - 1)}{\e _i(y)} .$$
This completes the proof of (B2).

Construction of $H_{i+1}$ follows that, $d_{H_{i+1}}(y)=d_{H_i}(y)-m_{B_i'(1)}(Q_i',W_i)$, and $d_{H_{i+1}}(v_{i+1})=m_{B_i'(1)}(Q_i',W_i)$. By (P3), $m_{B_i'(1)}(Q_i',W_i)\approx d_{H_i}(y)/\e_i(y)$. Hence

$$d_{H_{i+1}}(y)  \approx  d_{H_i}(y)-\frac{d_{H_i}(y)} {\e_i(y)}= \frac{d_{H_i}(y)(\e _i(y)-1)} {\e_i(y)}=  \frac{d_{H_i}(y)\e _{i+1}(y)} {\e_i(y)},$$
 and $d_{H_{i+1}}(v_{i+1}) \approx d_{H_i}(y)/\e_i(y)$. 
This completes the proof of (B3).

From the construction of $H_{i+1}$, we have that $d_{H_{i+1}(j)}(y)=d_{H_i(j)}(y)-m_{B_i'(1)}({\cal C}_{ij},W_i)$, and $d_{H_{i+1}(j)}(v_{i+1})=m_{B_i'(1)}({\cal C}_{ij},W_i)$. By (P4), $m_{B_i'(1)}({\cal C}_{ij},W_i) \approx  d_{H_i(j)}(y)/\e_i(y)$. Hence 

$$d_{H_{i+1}(j)}(y) \approx d_{H_i(j)}(y)- \frac{d_{H_i(j)}(y)} {\e_i(y)}= \frac{d_{H_i(j)}(y)(\e _i(y)-1)} {\e_i(y)}= \frac{d_{H_i(j)}(y)\e _{i+1}(y)} {\e_i(y)},$$
and $d_{H_{i+1}(j)}(v_{i+1}) \approx  d_{H_i(j)}(y)/\e_i(y)$. This completes the proof of (B4).

It is easy to see that, $m_{H_{i+1}}(y,v)=m_{H_{i}}(y,v)-m_{B_i'(1)}(Q_i',v)=m_{H_{i}}(y,v)-d_{B_i'(1)}(v)$, and $m_{H_{i+1}}(v_{i+1},v)=d_{B_i'(1)}(v)$. By \eqref{dbpiv1}, $d_{B_i'(1)}(v) \approx m_{H_i}(y,v)/\e_i(y)$. Hence 

$$m_{H_{i+1}}(y,v)  \approx  m_{H_{i}}(y,v)-\frac{m_{H_i}(y,v)} {\e_i(y)}= \frac{m_{H_i}(y,v)(\e_i(y)-1)} {\e_i(y)}= \frac{m_{H_i}(y,v)\e_{i+1}(y)} {\e_i(y)},$$ 
and $m_{H_{i+1}}(v_{i+1},v) \approx m_{H_i}(y,v)/\e_i(y)$. Moreover, $m_{H_{i+1}}(y,v_{i+1})=m_{B_i'(1)}(Q_i',{\cal L}_i)=d_{B_i'(1)}({\cal L}_i)$. By \eqref{dbpiv1}, $d_{B_i'(1)}({\cal L}_i) \approx 2\ell_{H_i}(y)/\e_i(y)$. Therefore $m_{H_{i+1}}(y,v_{i+1}) \approx 2\ell_{H_i}(y)/\e_i(y)$. Hence

$$\frac{m_{H_{i+1}}(y, v_{i+1})}{\e_{i+1}(y)}  \approx   \frac{2\ell_{H_i}(y)}{\e_i(y)\e_{i+1}(y)}= \frac {\ell_{H_i}(y)}{\binom{\e _i(y)}{2}}.$$
This completes the proof of (B5).

Finally, from the construction of $H_{i+1}$, $m_{H_{i+1}(j)}(y,v)=m_{H_{i}(j)}(y,v)-m_{B_i'(1)}({\cal C}_{ij},v)$, and $m_{H_{i+1}(j)}(v_{i+1},v)=m_{B_i'(1)}({\cal C}_{ij},v)$. By (P2), $m_{B_i'(1)}({\cal C}_{ij},v) \approx m_{H_i(j)}(y,v)/\e_i(y)$. Hence 

$$m_{H_{i+1}}(y,v) \approx m_{H_{i}(j)}(y,v)-\frac{m_{H_i(j)}(y,v)} {\e_i(y)}= \frac{m_{H_i(j)}(y,v)(\e_i(y)-1)} {\e_i(y)}= \frac{m_{H_i(j)}(y,v)\e_{i+1}(y)} {\e_i(y)},$$
 and $m_{H_{i+1}(j)}(v_{i+1},v) \approx m_{H_i(j)}(y,v)/\e_i(y)$. Moreover, $m_{H_{i+1}(j)}(y,v_{i+1})=m_{B_i'(1)}({\cal C}_{ij},{\cal L}_i)$. By (P1), $m_{B_i'(1)}({\cal C}_{ij},{\cal L}_i) \approx 2\ell_{H_i(j)}(y)/\e_i(y)$. Therefore  $m_{H_{i+1}(j)}(y,v_{i+1}) \approx 2\ell_{H_i(j)}(y)/\e_i(y)$. Hence 

$$\frac{m_{H_{i+1}(j)}(y, v_{i+1})}{\e_{i+1}(y)}  \approx  \frac{2\ell_{H_i(j)}(y)}{\e_i(y)\e_{i+1}(y)}= \frac {\ell_{H_i(j)}(y)}{\binom{\e _i(y)}{2}}.$$
This completes the proof of (B6).
\\

Recall that $\varphi_i=\psi_0\ldots\psi_i$, that $\psi_{0}:V\rightarrow V$, and that $\psi_{i}:V_{i}\rightarrow V_{i-1}$ for $i> 0$. Therefore $\varphi_i: V_i\rightarrow V$ and thus $\varphi_i^{-1}: V\rightarrow V_i$. 
Now we use (B1)-(B6) to prove that for $0 \leq i \leq n$, $H_i$ satisfies the following conditions:
\begin{itemize}
\item [\textup{(C1)}]  
\begin{itemize}
\item  [\textup{(i)}] $\ell _{H_i}(w)/\binom {\e _i(w)}{2}  \approx  \ell_H(w)/\binom {\e(w)}{2} $ for each $w\in V$ with $\e (w) \geq 2$, $\e _i(w) \geq 2$,
\item  [\textup{(ii)}] $\ell _{H_i}(w) =\ell _{H_i} (v_r) = 0$ for each $w\in V$ with $\e_i (w) =1$ and each $1\leq r\leq i;$
\end{itemize}
\item [\textup{(C2)}] $\ell _{H_i(j)}(w)/\binom {\e _i(w)}{2}\approx \ell_{H(j)}(w) /\binom {\e (w)}{2}$ for each $w\in V$ with $\e (w) \geq 2$, $\e _i(w) \geq 2$ and each $j\in \mathbb{Z}_k;$
\item [\textup{(C3)}] For each $w\in V$
\begin{itemize}
\item  [\textup{(i)}]  $d_{H_i}(w)/ \e _i(w) \approx d_H(w)/\e (w),$
\item  [\textup{(ii)}]  $d_{H_i}  (v_r) \approx d_H(w)/\e (w) $ for each $v_r \in \varphi_i ^{-1}[w];$
\end{itemize}
\item [\textup{(C4)}] For each $w\in V$ and each $j\in \mathbb{Z}_k$
\begin{itemize}
\item  [\textup{(i)}]  $d_{H_i(j)} (w) /\e _i(w) \approx d_{H(j)}(w)/\e (w),$
\item  [\textup{(ii)}] $d_{H_i(j)} (v_r) \approx d_{H(j)}(w)/\e (w)$ for each $v_r \in \varphi_i ^{-1}[w];$
\end{itemize}
\item [\textup{(C5)}] For each $w\in V$
\begin{itemize}
\item  [\textup{(i)}]  $m_{H_i}   (w, v_r)/\e _i(w) \approx \ell_H(w)/\binom {\e (w)}{2}$ for each $v_r \in \varphi_i ^{-1}[w],$
\item  [\textup{(ii)}]  $m_{H_i} (v_r, v_s) \approx \ell_H(w)/\binom {\e (w)}{2}$ for every pair of distinct vertices $v_r,v_s \in \varphi_i ^{-1}[w];$
\end{itemize}
\item [\textup{(C6)}]  For each $w\in V$, and each $j\in \mathbb{Z}_k$
\begin{itemize}
\item  [\textup{(i)}]  $m_{H_i(j) }(w, v_r)/\e _i(w) \approx \ell _{H(j)}(w)/\binom {\e (w)}{2}$ for each $v_r \in \varphi_i ^{-1}[w],$
\item  [\textup{(ii)}]  $m_{H_i(j)}(v_r, v_s) \approx \ell _{H(j)}(w)/\binom {\e (w)}{2}$ for every pair of distinct vertices $v_r,v_s \in \varphi_i ^{-1}[w];$ 
\end{itemize}
\item [\textup{(C7)}] For every pair of distinct vertices $w,z\in V$ 
\begin{itemize}
\item  [\textup{(i)}] $m_{H_i} (w, z)/ (\e _i(w) \e _i(z)) \approx  m_H(w, z)/(\e (w) \e(z)),$
\item  [\textup{(ii)}] $m_{H_i}(v_r, v_s) \approx m_H(w, z)/(\e (w) \e (z)) $ for each $v_r\in\varphi_{i}^{-1}[w]$  and each $v_s\in\varphi_{i}^{-1}[z],$
\item  [\textup{(iii)}] $m_{H_i} (w, v_s)/\e _i(w)  \approx m_H(w,    z)/(\e (w) \e (z))$ for each $v_s\in \varphi^{-1}_i[z];$
\end{itemize}
\item [\textup{(C8)}] For every pair of distinct vertices $w,z\in V$, and each $j\in \mathbb{Z}_k$ 
\begin{itemize}
\item  [\textup{(i)}] $m_{H_i(j)} (w, z)/( \e _i(w) \e _i(z)) \approx m_{H(j)}(w, z)/(\e (w) \e(z)),$
\item  [\textup{(ii)}] $m_{H_i(j)}(v_r, v_s) \approx m_{H(j)}(w, z)/(\e (w) \e (z)) $ for each $v_r\in\varphi_{i}^{-1}[w]$  and each $v_s\in\varphi_{i}^{-1}[z],$
\item  [\textup{(iii)}] $m_{H_i(j)} (w, v_s) /\e _i(w) \approx m_{H(j)}(w,    z)/(\e (w) \e (z)) $ for each $v_s\in \varphi^{-1}_i[z].$
\end{itemize}
\end{itemize}
Let $w,z$ be an arbitrary pair of distinct vertices of $V$, and let $j\in \mathbb{Z}_k$. We prove (C1)-(C8) by induction. Let us first verify (C1)-(C8) for $i=0$. Recall that $H_0=H$, and $\e _0(w)=\e(w)$.

If $\e(w)\geq 2$, obviously $\ell _{H_0}(w)/\binom {\e _0(w)}{2}  =  \ell_H(w)/\binom {\e(w)}{2} $. If $\e(w)=1$, by hypothesis of Theorem \ref{mainth}, $\ell _{H} (w) = 0$. This proves (C1) for $i=0$. (C2) can be proved in a similar way.  Obviously $d_{H_0}(w)/ \e _0(w) = d_H(w)/\e (w)$ and (C3)(ii) is obvious, so this proves (C3) for $i=0$. The proof for (C4) is similar and (C5)-(C8) are sufficiently obvious.

Now we will show that if $H_i$ satisfies the conditions (C1) - (C8) for some $i< n$, then $H_{i+1}$ (formed from $H_i$ by detaching $v_{i+1}$ from the vertex $y$) satisfies these conditions by replacing $i$ with $i+1$; we denote the corresponding conditions for $H_{i+1}$ by (C1)$'$-(C8)$'$. If $\e_{i+1}(w)=\e_i(w)$, then (C1)$'$-(C6)$'$ are obviously true. So we just check (C1)$'$-(C6)$'$ in the case where $w=y$.  Also if $\e_{i+1}(w)=\e_i(w)$ and $\e_{i+1}(z)=\e_i(z)$, then (C7)$'$-(C8)$'$ are  clearly true. So in order to prove  
(C7)$'$ - (C8)$'$ we shall assume that either $\e_{i+1}(w)=\e_i(w)-1$ or $\e_{i+1}(z)=\e_i(z)-1$. (so $y\in \{w,z\}$; the asymmetry in condition (iii) of (C7)$'$ and (C8)$'$ prevents us from assuming that $w=y$.) 
\begin{itemize}
\item [\textup{(C1)$'$}]  If $\e_{i+1}(y)\geq 2$, by (B1) $\ell _{H_{i+1}}(y) \approx  \ell _{H_i}(y)(\e_{i+1} (y) - 1)/\e _i(y)$, and by (C1)(i) of the induction hypothesis, $\ell _{H_i}(y)/\binom {\e _i(y)}{2}  \approx  \ell_H(y)/\binom {\e(y)}{2}$. Also note that $\binom{\e_i(y)}{2}=\e_i(y)(\e_{i}(y)-1)/2$. Therefore 
\begin{eqnarray*}
\frac{\ell _{H_{i+1}}(y)}{\binom {\e _{i+1}(y)}{2}}& \approx & \frac {\ell_{H_i}(y)(\e_{i+1} (y) - 1)}{\binom {\e _{i+1}(y)}{2}\e_i(y)} =\frac{\ell _{H_i}(y)}{\binom {\e _i(y)}{2}}  \approx \frac{ \ell_H(y)}{\binom {\e(y)}{2}}.
 \end{eqnarray*}
This proves (C1)$'$(i). 

Clearly $ \ell _{H_{i+1}} (v_{i+1}) = 0$ and $ \ell _{H_{i+1}} (v_r) = \ell _{H_i} (v_r)=0$ for each $1\leq r\leq i$. Therefore $ \ell _{H_{i+1}} (v_r) = 0$ for each $1\leq r\leq i+1$. Also if $\e_{i+1}(y)= 1$, by (B1) $\ell _{H_{i+1}}(y) =0$. This proves (C1)$'$(ii).
\item [\textup{(C2)$'$}] The proof is similar to the proof of (C1)$'$(i), following from (B2) and (C2) of the induction hypothesis.  
\item [\textup{(C3)$'$}] By (B3)(i), $d_{H_{i+1}}(y)/\e_ {i+1}(y) \approx  d_{H_i}(y) /\e _i(y)$, and by (C3)(i) of the induction hypothesis, $d_{H_i}(y)/ \e _i(y) \approx d_H(y)/\e (y)$. Therefore 
\begin{eqnarray*}
\frac{d_{H_{i+1}}(y)}{\e_ {i+1}(y)}& \approx & \frac{d_H(y)}{\e (y)}.
 \end{eqnarray*}
This proves (C3)$'$(i). 

By (B3)(ii), $d_{H_{i+1}}(v_{i+1}) \approx  d_{H_i}(y)/\e_i(y)$, and by (C3)(ii) of the induction hypothesis, $d_{H_i}  (v_r) \approx d_H(y)/\e (y) $ for each $v_r \in \varphi_i ^{-1}[y]$. Since in forming $H_{i+1}$ no edge is detached from $v_r$ for each $v_r \in \varphi_i ^{-1}[y]$, we have $d_{H_{i+1}}(v_{r})=d_{H_i}(v_r)$. Therefore $d_{H_{i+1}}  (v_r) \approx d_H(y)/\e (y) $ for each $v_r \in \varphi_{i+1} ^{-1}[y]$. This proves (C3)$'$(ii). 
\item [\textup{(C4)$'$}] The proof is similar to the proof of  (C3)$'$, following from (B4) and (C4) of the induction hypothesis.   
\item [\textup{(C5)$'$}] By (B5)(i), $m_{H_{i+1}} (y, v_r)/ \e_ {i+1}(y) \approx  m_{H_i} (y, v_r)/\e _i(y)$ for each $v_r \in \varphi_i ^{-1}[y]$. By (C5)(i) of the induction hypothesis, $m_{H_i}   (y, v_r)/\e _i(y) \approx \ell_H(y)/\binom {\e (y)}{2}$ for each $v_r \in \varphi_i ^{-1}[y]$. Therefore
\begin{eqnarray*}
\frac{m_{H_{i+1}} (y, v_r)}{ \e_ {i+1}(y)}& \approx & \frac {\ell_H(y)}{\binom {\e (y)}{2}}.
 \end{eqnarray*}
for each $v_r \in \varphi_i ^{-1}[y]$. Moreover, by (B5)(iii) $m_{H_{i+1}}(y, v_{i+1})/\e_{i+1}(y) \approx  \ell_{H_i}(y)/\binom{\e _i(y)}{2}$, and by (C1)(i) of the induction hypothesis, $\ell _{H_i}(y)  \approx  \ell_H(y)\binom {\e _i(y)}{2}/\binom {\e(y)}{2}$. Therefore 
\begin{eqnarray*}
\frac{m_{H_{i+1}} (y, v_{i+1})}{ \e_ {i+1}(y)}& \approx &   \frac {\ell_H(y)\binom {\e _i(y)}{2}}{\binom {\e(y)}{2}\binom {\e _i(y)}{2}}= \frac {\ell_H(y)}{\binom {\e (y)}{2}}.
 \end{eqnarray*}
This proves (C5)$'$(i). 

By (B5)(ii), $m_{H_{i+1}} (v_{i+1}, v_r) \approx   m_{H_i} (y, v_r) /\e _i(y)$ for each $v_r \in \varphi_i ^{-1}[y]$. By (C5)(i) of the induction hypothesis, $m_{H_i}   (y, v_r)/\e _i(y) \approx \ell_H(y)/\binom {\e (y)}{2}$ for each $v_r \in \varphi_i ^{-1}[y]$. Therefore $$m_{H_{i+1}} (v_{i+1}, v_r) \approx\frac {\ell_H(y)}{\binom {\e (y)}{2}}$$ for each $v_r \in \varphi_i ^{-1}[y]$. By (C5)(ii) of the induction hypothesis, $m_{H_i} (v_r, v_s) \approx \ell_H(y)/\binom {\e (y)}{2}$ for every pair of distinct vertices $v_r,v_s \in \varphi_i ^{-1}[y]$. Since in forming $H_{i+1}$ no edge is detached from $v_r$ for each $v_r \in \varphi_i ^{-1}[y]$, we have $m_{H_{i+1}} (v_r, v_s) =m_{H_i} (v_r, v_s)$. Therefore $$m_{H_{i+1}} (v_r, v_s) \approx \frac {\ell_H(y)}{\binom {\e (y)}{2}}$$ for every pair of distinct vertices $v_r,v_s \in \varphi_{i+1} ^{-1}[y]$. This proves (C5)$'$(ii). 
\item [\textup{(C6)$'$}] The proof is similar to the proof of (C5)$'$, following from (B6) and (C6) of the induction hypothesis. 

\item [\textup{(C7)$'$}] If $z\notin N_H(w)$ then $m_H(w,z)=0$ and (C7)$'$ is trivial. So we assume that $z\in N_H(w)$. 

\textbf{(i)} If $\e_{i+1}(w)=\e_i(w)-1$ (so $w=y$), by (B5)(i) $m_{H_{i+1}} (y, z)/ \e_ {i+1}(y) \approx  m_{H_i} (y, z) /\e _i(y)$, and since $\e _{i+1}(z)=\e _{i}(z)$, we have  $m_{H_{i+1}} (y, z)/ (\e_ {i+1}(y)\e_ {i+1}(z)) \approx m_{H_i} (y, z) /(\e _i(y)\e_ {i}(z))$. By (C7)(i) of the induction hypothesis, $m_{H_i} (y, z)/ (\e _i(y) \e _i(z)) \approx  m_H(y, z)/(\e (y) \e(z))$. Therefore 
\begin{eqnarray*}
\frac{m_{H_{i+1}} (y, z)}{ \e _{i+1}(y) \e _{i+1}(z)} & \approx &   \frac {m_H(y, z)}{\e (y) \e(z)}.
 \end{eqnarray*}
The other case, $\e_{i+1}(z)=\e_i(z)-1$), is similar. This proves (C7)$'$(i).

\textbf{(ii)} By (C7)(ii) of the induction hypothesis $m_{H_i}(v_r, v_s) \approx m_H(w, z)/(\e (w) \e (z)) $ for each $v_r\in\varphi_{i}^{-1}[w]$  and each $v_s\in\varphi_{i}^{-1}[z]=\varphi_{i+1}^{-1}[z]$. Since in forming $H_{i+1}$ no edge is detached from $v_r$ and  $v_s$  for each $v_r\in\varphi_{i}^{-1}[w]$  and each $v_s\in\varphi_{i}^{-1}[z]$, we have $m_{H_{i+1}}(v_r, v_s)=m_{H_i}(v_r, v_s)$. Therefore $m_{H_{i+1}}(v_r, v_s) \approx m_H(w, z)/(\e (w) \e (z)) $ for each $v_r\in\varphi_{i}^{-1}[w]$  and each $v_s\in\varphi_{i+1}^{-1}[z]$. If $\e_{i+1}(y)=\e_i(y)-1$ (so $w=y$), by  (B5)(ii) $m_{H_{i+1}} (v_{i+1}, v_s) \approx   m_{H_i} (y, v_s) /\e _i(y)$ for each $v_s \in \varphi_i ^{-1}[z]= \varphi_{i+1} ^{-1}[z]$. By (C7)(iii) of induction hypothesis, $m_{H_i} (y, v_s)/\e _i(y)  \approx m_H(y,    z)/(\e (y) \e (z))$. So $$m_{H_{i+1}} (v_{i+1}, v_s) \approx \frac {m_H(y,    z)}{\e (y) \e (z)}.$$ The other case, $\e_{i+1}(z)=\e_i(z)-1$, is similar. This proves (C7)$'$(ii).

\textbf{(iii)} If $\e_{i+1}(y)=\e_i(y)-1$ (so $w=y$), then by  (B5)(i) $m_{H_{i+1}}(y, v_s)/ \e_ {i+1}(y) \approx  m_{H_i} (y, v_s) /\e _i(y)$ for each $v_s \in \varphi_i ^{-1}[z]= \varphi_{i+1} ^{-1}[z]$. But by (C7)(iii) of induction hypothesis, $m_{H_i} (y, v_s) /\e _i(y)\approx m_H(y,    z)/(\e (y) \e (z))$ for each $v_s \in \varphi_i ^{-1}[z]$. Therefore 
\begin{eqnarray*}
\frac{m_{H_{i+1}} (y, v_s)}{ \e_ {i+1}(y)} & \approx &   \frac {m_{H} (y, z) }{\e (y)\e(z)}
 \end{eqnarray*}
 for each $v_s \in \varphi_{i+1} ^{-1}[z]$. If $\e_{i+1}(z)=\e_i(z)-1$ (so $z=y$), then since in forming $H_{i+1}$ no edge is detached from $v_s$ for each $v_s\in\varphi_{i}^{-1}[y]$, we have $m_{H_{i+1}}(w, v_s)=m_{H_i}(w,v_s)$ for each $v_s\in\varphi_{i}^{-1}[y]$. Therefore $m_{H_{i+1}}(w, v_s)/\e_{i+1}(w)=m_{H_i}(w,v_s)/\e_i(w)$ for each $v_s\in\varphi_{i}^{-1}[y]$. Moreover, by (B5)(ii) $m_{H_{i+1}}(w,v_{i+1})\approx m_{H_i}(w,y)/\e_i(y)$. Therefore  $m_{H_{i+1}}(w,v_{i+1})/\e_{i+1}(w)\approx m_{H_i}(w,y)/(\e_i(w)\e_i(y))$. By (C7)(i) of induction hypothesis, $m_{H_i}(w,y)/(\e_i(w)\e_i(y))=m_{H}(w,y)/(\e(w)\e(y))$. Hence 
 $$\frac{m_{H_{i+1}}(w,v_{i+1})}{\e_{i+1}(w)}\approx \frac{m_{H}(w,y)}{\e(w)\e(y)}.$$ 
 This proves (C7)$'$(iii).
 \item [\textup{(C8)$'$}] The proof is similar to the proof of (C7)$'$, following from (B6) and (C8) of the induction hypothesis. 

\end{itemize}
As a result of (C1)-(C8), we prove that $G$ is loopless, and satisfies conditions \textup{(A1)-(A6)} of Theorem  \ref{mainth}. 
Recall that $H_n=G$, $\varphi _n=\psi$, and $\e_n(w)=1$ for each $w\in V$. 
Let $w,z$ be an arbitrary pair of distinct vertices of $V$, and let $j\in \mathbb{Z}_k$.  
Now in (C1)-(C8) we let $i=n$. From C1(ii) it is immediate that $G$ is loopless. 

From (C3)(i) it follows that $d_{H_n}(w)/ \e _n(w) \approx d_H(w)/\e (w)$, so  $d_{G}(w) \approx d_H(w)/\e (w)$. From (C3)(ii), $d_{H_n}  (v_r) \approx d_H(w)/\e (w) $ for each $v_r\in\varphi _n^{-1}[w]$, so $d_{G}  (v_r) \approx d_H(w)/\e (w) $ for each $v_r\in\psi ^{-1}[w]$. Therefore $G$ satisfies (A1). 

From (C5)(i) it follows that  $m_{H_n}(w, v_r)/\e _n(w) \approx \ell_H(w)/\binom {\e (w)}{2}$ for each $v_r\in\varphi _n^{-1}[w]$, so $m_{G}(w, v_r) \approx \ell_H(w)/\binom {\e (w)}{2}$ for each $v_r\in\psi ^{-1}[w]$. From (C5)(ii), $m_{H_n} (v_r, v_s) \approx \ell_H(w)/\binom {\e (w)}{2}$ for every pair of distinct vertices $v_r,v_s \in \varphi_n^{-1}[w]$, so $m_{G} (v_r, v_s) \approx \ell_H(w)/\binom {\e (w)}{2}$ for every pair of distinct vertices $v_r,v_s \in \psi^{-1}[w]$. Therefore $G$ satisfies (A3). 

From (C7)(i) it follows that $m_{H_n} (w, z)/ (\e _n(w) \e _n(z)) \approx  m_H(w, z)/(\e (w) \e(z))$, so $m_{G} (w, z) \approx  m_H(w, z)/(\e (w) \e(z))$. From (C7)(ii), $m_{H_n}(v_r, v_s) \approx m_H(w, z)/(\e (w) \e (z)) $ for each $v_r\in\varphi_{n}^{-1}[w]$  and each $v_s\in\varphi_{n}^{-1}[z]$, so $m_{G}(v_r, v_s) \approx m_H(w, z)/(\e (w) \e (z)) $ for each $v_r\in\psi^{-1}[w]$  and each $v_s\in\psi^{-1}[z]$. From (C7)(iii) it follows that $ m_{H_n} (v _r, z)/\e _n(z) \approx m_H(w,    z)/(\e (w) \e (z)) $ for each $v_r\in \varphi^{-1}_n[w]$, so $ m_{G} (v _r, z)\approx m_H(w,    z)/(\e (w) \e (z)) $ for each $v_r\in \psi^{-1}[w]$. From (C7)(iii), $m_{H_n} (w, v_s)/\e _m(w)  \approx m_H(w,    z)/(\e (w) \e (z))$ for each $v_s\in \varphi^{-1}_n[z]$, so $m_{G} (w, v_s) \approx m_H(w,    z)/(\e (w) \e (z))$ for each $v_s\in \psi^{-1}[z]$. Therefore $G$ satisfies (A5). 

A similar argument shows that $G$ satisfies (A2), (A4), (A6). In order to prove that $G$ satisfies the last condition (A7) of Theorem \ref{mainth}, it suffices to show that if  for some $j\in\mathbb{Z}_k$, $d_{H_i(j)} (v)/ \e _i(v)$ is even for all $v \in V_i$, then 
 \begin{itemize}
 \item [\textup{(D1)}]  $d_{H_{i+1}(j)} (v)/\e _{i+1}(v)$ is an even integer for all $v \in V_{i+1},$ and  
 \item  [\textup{(D2)}] $\omega (H_{i+1} (j)) =  \omega(H_i(j))$.
 \end{itemize}
For then, if for each $v \in V(H)=V_0$, $d_{H(j)} (v)/ \e (v)=d_{H_0(j)} (v)/ \e _0(v)$ is an even integer, then it follows inductively that for each $0\leq r\leq n$ and each $v \in V_r$, $d_{H_r(j)} (v)/ \e _r(v)$ is an even integer and 
$$\omega (H_r(j))=\omega (H_0 (j)).$$
Therefore $\omega (G(j))=\omega (H_n(j))=\omega (H_0 (j))=\omega (H (j)).$ This will complete the proof of Theorem \ref{mainth}.

So we now establish (D1) and  (D2). Let $j \in \mathbb{Z}_k$ be a color for which for all $v \in V_i$, $d_{H_i(j)} (v)/ \e _i(v)$ is an even integer. Recall that $y$ is the vertex for which $\e_{i+1}(y)=\e_i(y)-1$. To establish (D1), there are three cases to consider:
\begin{itemize}
\item Case 1: $v \notin \{y, v _{i+1}\}$. Clearly $d_{H_{i+1}(j)}(v) =d_{H_i(j)}(v) $ and $\e _{i+1} (v)= \e _{i} (v)$.  So $d_{H_{i+1}(j)}(v)/\e  _{i+1}(v) = d_{H_i(j)}(v)/\e _i(v)$ which is an even integer.
\item Case 2: $v =y$. From (B4)(i), it follows that $d_{H_{i+1}(j)}(y)/ \e_ {i+1}(y) =  d_{H_i(j)}(y)/\e _i(y)$ which is an even integer.
\item Case 3: $v = v _{i+1}$. From (B4)(ii), it follows that $d_{H_{i+1}(j)}(v_{i+1}) = d_{H_i(j)}(y)/\e_i(y)$ which is an even integer. 
\end{itemize}
This proves (D1).

In order to prove (D2), let $H^y_i(j)$ be the component of $H_i(j)$ which contains $y$. It is enough to show that $\omega(H^y_{i+1}(j))=\omega(H^y_i(j))$. Let $\omega_{ij}=\omega(H^y_i(j)\backslash\{y\})$ and let $\Gamma_{i,j, 1}, \dots, \Gamma_{i,j,\omega_{ij}}$ be the vertex sets of the components of $H^y_i(j)\backslash\{y\}$. 
Note that  $\Gamma_{i,j, r}$ is a subset of  $V(B_i)$, of $V(T_i)$, and of $V(B'_i)$ for  $1\leq r\leq \omega_{ij}$. 
\begin{figure}[htbp]
\begin{center}
\scalebox{.65}{ \includegraphics  {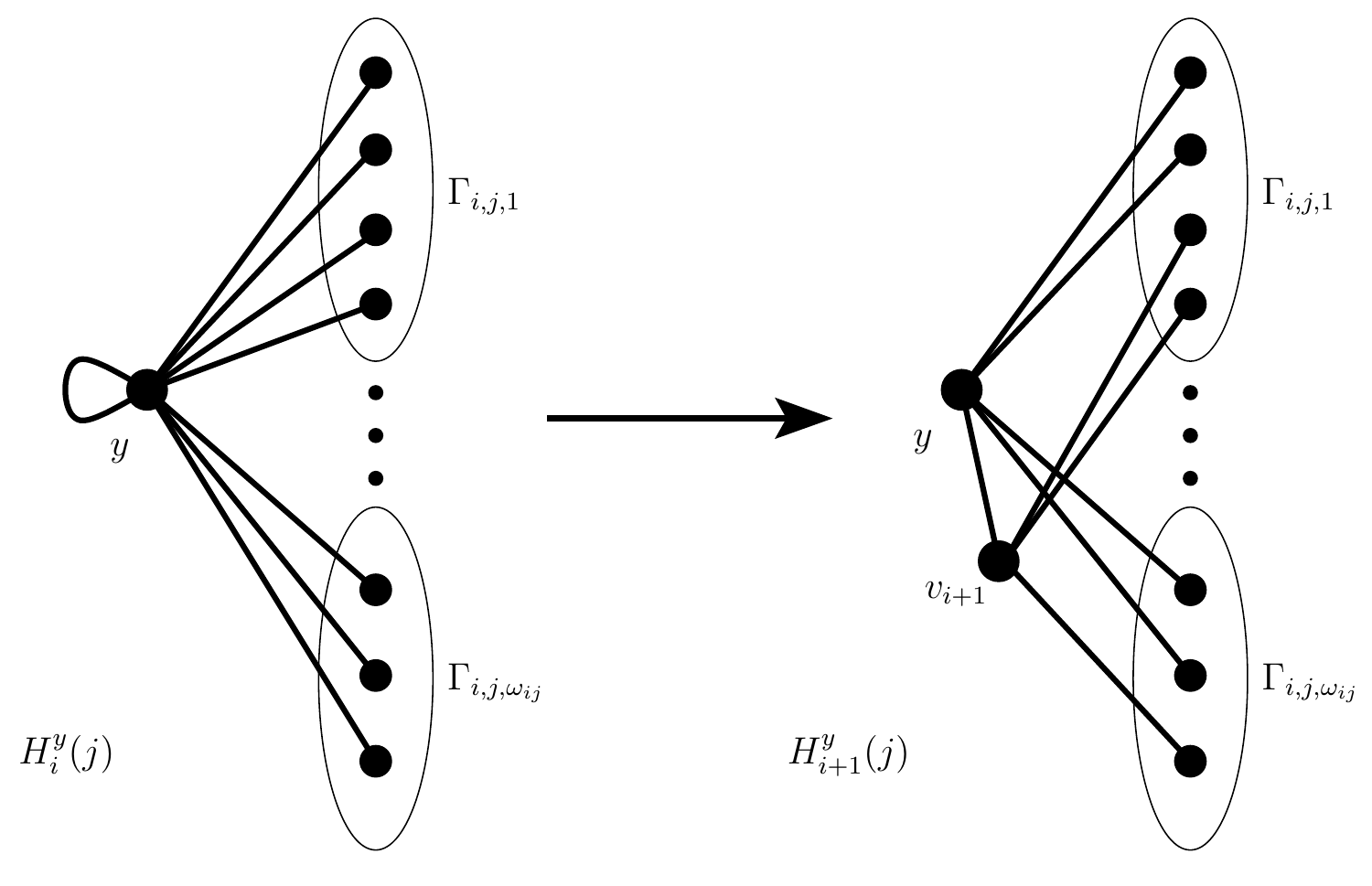} }
\caption{Detachment of $H_{i}^y(j)$ into $H_{i+1}^y(j)$}
\label{figure:H_i+1(j)y}
\end{center}
\end{figure}
Since $d_{H_i(j)}(v)/\e _i(v)$ is an even integer for each $v\in V_i$, it follows that $d_{H_i(j)}(v)$ is an even integer for each $v\in V_i$. Therefore $H_i(j)$ is an even graph (all vertices are of even degree). Since $d_{H_i(j)}(y)$ is even, so is $d_{H_i(j)}(y) - 2 \ell _{H_i(j)}(y)$. 
 Since $H_i(j)$ is an even graph, and the sum of the degree of the vertices in any graph must be even, it follows that $m_{H_i(j)}(y, \Gamma_{i,j, t})=m_{B_i}(c_{ij}, \Gamma_{i,j, t})$ is even for $1 \leq t \leq \omega_{ij}$. (In fact every edge cut in $H_i(j)$ is even.) 
Now from (M2) it follows that for each $t$, $1 \leq t \leq \omega_{ij}$, $m_{B_i'(1)}({\cal C}_{ij},  \Gamma_{i,j, t}) \approx m_{B'_i}({\cal C}_{ij},  \Gamma_{i,j, t})/2$. There are two cases to consider:
\begin{itemize}
\item Case 1: $m_{T_i}(c_{ij},  \Gamma_{i,j, t}) = m_{B_i}(c_{ij},  \Gamma_{i,j, t})$. In this case we have
$$m_{B_i'(1)}({\cal C}_{ij},  \Gamma_{i,j, t}) = \frac{m_{B'_i}({\cal C}_{ij},  \Gamma_{i,j, t})}{2}=\frac{m_{T_i}(c_{ij},  \Gamma_{i,j, t})}{2} = \frac{m_{B_i}(c_{ij},  \Gamma_{i,j, t})}{2}.$$
\item Case 2: $m_{T_i}(c_{ij},  \Gamma_{i,j, t}) < m_{B_i}(c_{ij},  \Gamma_{i,j, t})$. In this case we have
$$m_{B_i'(1)}({\cal C}_{ij},  \Gamma_{i,j, t}) \approx \frac{m_{B'_i}({\cal C}_{ij},  \Gamma_{i,j, t})}{2}=\frac{m_{T_i}(c_{ij},  \Gamma_{i,j, t})}{2} < \frac{m_{B_i}(c_{ij},  \Gamma_{i,j, t})}{2}.$$
\end{itemize}
Therefore in both cases $m_{B_i'(1)}({\cal C}_{ij},  \Gamma_{i,j, t})\leq m_{B_i}(c_{ij},  \Gamma_{i,j, t})/2$ for $1 \leq t \leq \omega_{ij}$. 
This is shown in Figure \ref{figure:H_i+1(j)y}.
This means, at most half of the edges joining $y$ to $ \Gamma_{i,j, t}$, $1\leq t \leq \omega_{ij}$, are moved to $v_{i+1}$ in forming $H_{i+1}$. So from each vertex $u\neq v_{i+1}$ in $H^y_{i+1}(j)$, there is a path of edges colored $j$ from $u$ to $y$. Moreover, $v_{i+1}$ is either adjacent with $y$ or is adjacent with another vertex in $H^y_{i+1}(j)$, so $v_{i+1}$ is also joined to $y$ by a path of edges colored $j$. Therefore $\omega(H^y_{i+1}(j))=\omega(H^y_i(j))$. This proves (D2) and the proof of Theorem \ref{mainth} is complete.
\end{proof}
\section{Hamiltonian Decomposition of \mathversion{bold}{$K(a_1, \dots, a_p; \la _1, \la _2)$}} \label{hdka1ap}
Let $a_1,\dots,a_p\in \mathbb{N}$, and  $\la_1,\la_2\in\mathbb{N}\cup\{0\}$. 
Let $G$ be the graph $K(a_1, \dots, a_p; \la _1, \la _2)$. Recall that $G$ is a graph with $p$ parts $V_1,\ldots,V_p$, with $|V_i|=a_i$ for $1\leq i \leq p$, $m_G(u,v)=\lambda_1$ for every pair of distinct vertices $u,v\in V_i$ for $1\leq i \leq p$, and $m_G(u,v)=\lambda_2$ for each $u\in V_i, v\in V_j$ for $1\leq i < j \leq p$. 

A graph $G$ is said to be even if all of its vertices have even degree. Let $k\in \mathbb{N}$. We say that $G$ has an evenly-equitable $k$-edge-coloring if $G$ has a $k$-edge-coloring for which,  for each $v\in V(G)$
\begin{itemize}
\item [(i)] $d_{G(i)}(v)$ is even for each $i\in \mathbb{Z}_k$, and 
\item [(ii)] $|d_{G(i)}(v)-d_{G(j)}(v)|\in\{0,2\}$ for each $i,j\in \mathbb{Z}_k$.  
\end{itemize}
We need the following theorem of Hilton \cite{H1}:
\begin{theorem}\textup{(Hilton \cite[Theorem 8]{H1})}\label{eveneq}
Each finite even graph has an evenly-equitable $k$-edge-coloring for each $k\in \mathbb{N}$.
\end{theorem}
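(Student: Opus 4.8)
The plan is to prove this by reducing to the connected case and then transporting the problem, via an Eulerian orientation, to a bipartite graph on which Theorem~\ref{BEE} (de Werra) can be applied. Since conditions (i) and (ii) each constrain only the edges incident with a single vertex, a $k$-edge-colouring of $G$ is evenly-equitable if and only if its restriction to each connected component is; so I may assume $G$ is connected. Being even and connected, $G$ has an Eulerian circuit, which I fix and use to orient every edge, obtaining a digraph $D$ with $d_D^+(v)=d_D^-(v)=d_G(v)/2$ for every $v$; write $m(v)=d_G(v)/2$ (loops and multiple edges are handled uniformly, a loop at $v$ becoming an arc that later contributes an even amount at $v$).

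Next I would pass to a bipartite graph. Let $B$ have parts $\{v^+:v\in V(G)\}$ and $\{v^-:v\in V(G)\}$, with each arc $u\to w$ of $D$ contributing an edge $u^+w^-$; then $d_B(v^+)=d_B(v^-)=m(v)$, and any $k$-edge-colouring of $B$ pulls back to one of $G$ with $d_{G(i)}(v)=d_{B(i)}(v^+)+d_{B(i)}(v^-)$ for each colour $i$. By Theorem~\ref{BEE}, $B$ has a balanced, equitable, equalized $k$-edge-colouring; equitability gives $d_{B(i)}(v^+)\approx m(v)/k$ and $d_{B(i)}(v^-)\approx m(v)/k$, so each $d_{G(i)}(v)$ lies between $2\lfloor m(v)/k\rfloor$ and $2\lceil m(v)/k\rceil$. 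Hence the colour-degrees at any one vertex differ by at most $2$, which is precisely condition (ii).

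The difficulty is condition (i): this colouring need not make every colour class even. Indeed $d_{G(i)}(v)$ is odd exactly when one of $d_{B(i)}(v^+),\,d_{B(i)}(v^-)$ equals $\lceil m(v)/k\rceil$ while the other equals $\lfloor m(v)/k\rfloor$, which can occur whenever $k\nmid m(v)$. I would remove these defects by a colour-exchange argument. At each vertex the out-side multiset $\{d_{B(i)}(v^+)\}_i$ and the in-side multiset $\{d_{B(i)}(v^-)\}_i$ coincide, each consisting of $m(v)\bmod k$ ceilings and the rest floors; the only mismatch is which colours carry the ceiling on each side. Writing $\delta_i(v)=d_{B(i)}(v^+)-d_{B(i)}(v^-)\in\{-1,0,1\}$ (with $\sum_i\delta_i(v)=0$), my goal is to make every $\delta_i(v)=0$. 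Transposing two colours $a,b$ along an alternating $a/b$-trail in $D$ leaves all colour-degrees fixed at interior vertices and alters them only at the ends, so by routing such transpositions between vertices with opposing defects I can align the ceiling-colours between the two sides at all vertices simultaneously. Once $d_{B(i)}(v^+)=d_{B(i)}(v^-)$ for every $i,v$, each $d_{G(i)}(v)=2\,d_{B(i)}(v^+)$ is even, giving condition (i), while no degree value changes, so condition (ii) is preserved.

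The main obstacle is exactly this reconciliation of evenness with balance. Balance on its own is immediate from Theorem~\ref{BEE}, and evenness on its own would be immediate if I simply coloured $G$ one cycle at a time (any assignment of whole cycles of an even-graph cycle decomposition to colours makes every colour class even); but the two requirements pull against each other whenever $k\nmid m(v)$. The crux is therefore to show that finitely many side-consistent colour transpositions, organised along alternating trails of $D$, can be performed so as to zero out all the defects $\delta_i(v)$ at once without disturbing any colour-degree. I expect the bookkeeping here --- pairing up defect vertices and verifying that the alternating trails flip each colour pair consistently at every interior vertex --- to be the delicate part of the proof.
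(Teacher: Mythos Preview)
The paper does not prove Theorem~\ref{eveneq}; it is simply quoted from Hilton~\cite{H1} and used as a black box in Section~\ref{hdka1ap}. So there is no proof in this paper against which to compare your attempt.

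Evaluating your argument on its own merits: the reduction via an Eulerian orientation to the bipartite graph $B$ and the appeal to Theorem~\ref{BEE} are sound and do give $d_{G(i)}(v)\in\{2\lfloor m(v)/k\rfloor,\ 2\lfloor m(v)/k\rfloor+1,\ 2\lceil m(v)/k\rceil\}$, hence the bound $|d_{G(i)}(v)-d_{G(j)}(v)|\le 2$. (Strictly, this is not yet condition~(ii), which forbids a difference of~$1$; but that follows automatically once~(i) holds, so this is harmless.) The genuine gap is exactly where you locate it: the repair step for condition~(i). Your claim that ``transposing two colours $a,b$ along an alternating $a/b$-trail in $D$ leaves all colour-degrees fixed at interior vertices'' is ambiguous in a way that matters. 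If ``colour-degrees'' means $d_{G(i)}(v)$ and the trail is in the underlying undirected graph, then yes, interior degrees are preserved --- but the defects $\delta_i(v)=d_{B(i)}(v^+)-d_{B(i)}(v^-)$ depend on the \emph{orientation} of each edge, and recolouring an arc $u\to w$ from $a$ to $b$ alters $\delta_a,\delta_b$ at both $u$ and $w$ regardless of whether they are interior to the trail. If instead you mean a directed alternating trail $v_0\to v_1\to v_2\to\cdots$, a direct check shows $\delta_a(v_1)$ changes by $+2$ and $\delta_b(v_1)$ by $-2$, so interior defects are again disturbed. The operation that \emph{does} preserve the $d_{B(i)}$-values at interior vertices is a Kempe swap along an alternating path in $B$ itself, but then you must argue that such swaps can be chosen to drive all $\delta_i(v)$ to zero while keeping every $d_{B(i)}(v^{\pm})$ within $\{\lfloor m(v)/k\rfloor,\lceil m(v)/k\rceil\}$; you have not done this, and it is not routine, since $B(a)\cup B(b)$ need not have maximum degree~$2$. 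Your final sentence concedes that this step is unproved; as written the argument is a plausible outline, not a proof.
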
 

Walecki's construction for Hamiltonian decomposition of $K_n$ and $K_n-F$ where $F$ is a 1-factor \cite{L}, easily provides the following result:
\begin{theorem}\label{hdcm}
The graph $\lambda K_n$ is Hamiltonian decomposable if and only if $\lambda (n-1)$ is an even integer. 
\end{theorem}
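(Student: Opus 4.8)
The plan is to prove Theorem \ref{hdcm} by first handling the necessity, which is elementary, and then using the amalgamation machinery of Theorem \ref{mainth} together with Walecki's classical construction to establish sufficiency. For necessity: if $\lambda K_n$ decomposes into Hamilton cycles, then it is $2t$-regular for some $t$ (each Hamilton cycle contributes $2$ to each vertex degree), so $\lambda(n-1) = 2t$ is even. For sufficiency, assume $\lambda(n-1)$ is even; I want to build a $k$-edge-colored graph $G = \lambda K_n$ in which every color class is a Hamilton cycle, where $k = \lambda(n-1)/2$.

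The key idea is to start with a small amalgamated graph $H$ and detach it. Let $H$ be the graph on a single vertex $w$ with $\e(w) = n$, carrying $k = \lambda(n-1)/2$ loops of each color, i.e. $H(j)$ is $\binom{n}{2}\lambda / k$... more precisely, I want $H$ to have $\ell_H(w) = \lambda\binom{n}{2}$ loops partitioned into $k$ color classes each consisting of $n$ loops (so that $d_{H(j)}(w) = 2n$ for each $j$), wait — I should be careful: a Hamilton cycle on $n$ vertices has $n$ edges, so when amalgamated to one vertex it becomes $n$ loops, contributing $2n$ to the degree. So set $H$ to be the one-vertex graph with number function $\e(w) = n$, give it $k$ color classes each a set of $n$ loops on $w$, so $d_{H(j)}(w) = 2n$ and $d_H(w) = 2nk = 2n \cdot \lambda(n-1)/2 = \lambda n(n-1) = 2\lambda\binom{n}{2}$, as it should be. Then Theorem \ref{mainth} produces a loopless $\e$-detachment $G$ on $n$ vertices. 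Condition (A1) forces $d_G(u) \approx d_H(w)/\e(w) = 2\lambda(n-1)/2 \cdot ... = \lambda(n-1)$, so $G$ is $\lambda(n-1)$-regular; condition (A3) forces $m_G(u,u') \approx \ell_H(w)/\binom{n}{2} = \lambda$, so every pair of vertices has multiplicity exactly $\lambda$, i.e. $G = \lambda K_n$; condition (A2) forces $d_{G(j)}(u) \approx d_{H(j)}(w)/\e(w) = 2n/n = 2$, so every color class is $2$-regular; and since $d_{H(j)}(w)/\e(w) = 2$ is an even integer for every $w$ (there's only one), condition (A7) gives $\omega(G(j)) = \omega(H(j)) = 1$, so each color class is a connected $2$-regular spanning subgraph of $G$ — a Hamilton cycle. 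Hence $G = \lambda K_n$ has a Hamilton decomposition.

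The one genuine issue is whether $H$ can be constructed so that each color class $H(j)$ is connected, which is needed for (A7) to yield connected color classes; but a single vertex with any positive number of loops is trivially connected, so $\omega(H(j)) = 1$ automatically. The only real obstacle, then, is the arithmetic bookkeeping: verifying that the "$\approx$" relations collapse to exact equalities here (they do, because all the target quantities $\lambda(n-1)$, $\lambda$, $2$ are integers and each is divided evenly), and checking the hypothesis of Theorem \ref{mainth} that $\e(w) = 1 \Rightarrow \ell_H(w) = 0$ holds vacuously since $\e(w) = n$; if $n = 1$ the statement is degenerate ($K_1$ has no edges and $\lambda\cdot 0$ is even) and should be dispatched separately. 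I would also remark, as the paper does, that this recovers Walecki's theorem ($\lambda = 1$: $K_n$ is Hamilton decomposable iff $n$ is odd) as the special case, which is a reassuring sanity check. I expect the writeup to be short, with the bulk being the explicit description of $H$ and the verification that (A1)–(A3) and (A7) specialize exactly as claimed.
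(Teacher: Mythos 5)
Your argument is correct, but it takes a genuinely different route from the paper: the paper gives no proof of Theorem \ref{hdcm} at all, simply asserting that Walecki's explicit construction for $K_n$ and $K_n-F$ \cite{L} "easily provides" it, whereas you derive it as a corollary of Theorem \ref{mainth} by amalgamating all of $\lambda K_n$ onto a single vertex $w$ with $\e(w)=n$ and $\lambda\binom{n}{2}$ loops split into $k=\lambda(n-1)/2$ monochromatic classes of $n$ loops each. Your specializations of (A1), (A2), (A3), (A7) all check out (each target value $\lambda(n-1)$, $2$, $\lambda$ is an integer, so the $\approx$ relations collapse to equalities), the hypothesis $\e(w)=1\Rightarrow\ell_H(w)=0$ is vacuous for $n\geq 2$, and there is no circularity, since the proof of Theorem \ref{mainth} nowhere uses Theorem \ref{hdcm}. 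What your route buys is self-containment within the paper's machinery --- it is essentially Hilton's amalgamation proof from \cite{H2}, which the paper itself notes is an immediate corollary of Theorem \ref{mainth} --- at the cost of invoking a heavy theorem for a classical fact; the paper's citation is shorter and also covers the companion statement about $K_n-F$. Two small points to tidy in a final writeup: the degenerate cases $n\leq 2$ (for $n=2$ and $\lambda$ even the color classes produced are pairs of parallel edges, i.e.\ $2$-cycles, which one must agree to call Hamilton cycles of the multigraph, as the statement of Theorem \ref{hdcm} implicitly does), and the case $\lambda(n-1)=0$, where $k=0$ and the decomposition is empty.
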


Using these two results, together with Theorem \ref{mainth}, now we are able to find necessary and sufficient conditions for $K(a_1, \dots, a_p; \la _1, \la _2)$ to be Hamiltonian decomposable. 
Let us first look at some trivial cases:
\begin{itemize}
\item [(i)] If $p=1$, then $G=\la_1K_{a_1}$ which by Theorem \ref{hdcm}, is Hamiltonian decomposable if and only if $\la_1 (a_1-1)$ is even. 
\item [(ii)] If $p>1$,$\la_2=0$, then $G=\bigcup\limits_{i=1}^p{\la_1K_{a_i}}$. Clearly $G$ is disconnected and so is not Hamiltonian decomposable. 
\item [(iii)] If $a_i=1$ for $1\leq i\leq p$, then $G=\la_2 K_p$ which is Hamiltonian decomposable if and only if $\la_2 (p-1)$ is even. 
\item [(iv)] If $\la_1= \la_2$, then $G= \la_1 K_{a_1+\dots+a_p}$ which is Hamiltonian decomposable if and only if $\la_1 (\sum\limits_{i=1}^{p}{a_i}-1)$ is even. 
\end{itemize}
We exclude the above four cases from our theorem:
\begin{theorem}\label{hdka1apl1l2}
Let $p> 1$, $\la_1\geq 0$, and $\la_2\geq 1$, with $\lambda_1 \neq \lambda_2$ be integers. Let $a_1,\ldots,a_p$ be positive integers with $a_1\leq\ldots \leq a_p$, and $a_p\geq 2$. Let $G=K(a_1, \dots, a_p; \la _1, \la _2)$. Then $G$ is Hamiltonian decomposable if and only if the following conditions are satisfied:
\begin{itemize}
\item [\textup{(i)}] $a_i=a_j:=a$ for $1\leq i<j\leq p;$
\item [\textup{(ii)}]$\la_1(a-1)+\la_2a(p-1)$ is an even integer\textup{;} 
\item [\textup{(iii)}]$\la_1\leq \la_2a(p-1).$
\end{itemize}
\end{theorem}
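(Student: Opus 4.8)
The plan is to prove necessity first, which is largely elementary, and then to obtain sufficiency as a clean corollary of Theorem~\ref{mainth} applied to a suitably chosen amalgamated graph. For necessity, suppose $G=K(a_1,\dots,a_p;\lambda_1,\lambda_2)$ is Hamiltonian decomposable. Since every Hamiltonian cycle is $2$-regular and spanning, $G$ must be regular; computing $d_G(u)=\lambda_1(a_i-1)+\lambda_2\sum_{j\neq i}a_j$ for $u\in V_i$ and equating these degrees across parts (using $\lambda_1\neq\lambda_2$) forces $a_i=a_j$ for all $i,j$, giving~(i). With $a_i=a$ the common degree is $\lambda_1(a-1)+\lambda_2 a(p-1)$, which must be even because it equals twice the number of Hamiltonian cycles in the decomposition, giving~(ii). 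For~(iii), I would count edges within a fixed part versus edges leaving it: a Hamiltonian cycle restricted to a part $V_i$ is a union of paths, and the number of within-part edges of the cycle is at most the number of edges of the cycle leaving $V_i$ (each maximal path inside $V_i$ on $t$ vertices contributes $t-1$ internal edges and $2$ leaving edges, with the crude bound $t-1\le$ (leaving edges) holding once $t\ge 2$, and summing appropriately); since $G$ has $\lambda_1\binom{a}{2}$ edges inside $V_i$ and $\lambda_2 a^2(p-1)/$ something leaving — more precisely $m_G(V_i,V\setminus V_i)=\lambda_2 a\cdot a(p-1)$ counted with multiplicity — dividing the totals by the number of Hamiltonian cycles and comparing yields $\lambda_1(a-1)\le\lambda_2 a(p-1)$, i.e.\ $\lambda_1\le\lambda_2 a(p-1)$ after dividing by $a-1$ (the case $a=1$ being excluded, or handled trivially). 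I expect~(iii) to require the most care, as the path-counting inequality must be set up so it holds edge-class by edge-class and aggregates correctly.

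For sufficiency, assume (i)--(iii) hold and set $d=\lambda_1(a-1)+\lambda_2 a(p-1)$, which is even, and let $k=d/2$ be the target number of Hamiltonian cycles. The strategy is to build an amalgamated graph $H$ on $p$ vertices $w_1,\dots,w_p$, with a number function $\e(w_i)=a$ for each $i$, such that $H$ carries a $k$-edge-coloring in which each color class is connected and "looks like" a Hamiltonian cycle after detachment, and then invoke Theorem~\ref{mainth} to obtain a loopless $\e$-detachment $G'$ that is forced by (A1)--(A6) to be exactly $K(a,\dots,a;\lambda_1,\lambda_2)$ and by (A7) to have each color class connected; a connected $2$-regular graph is a Hamiltonian cycle, so the color classes give the desired decomposition. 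Concretely, $H$ should be the graph on $\{w_1,\dots,w_p\}$ with $\lambda_1\binom{a}{2}$ loops at each $w_i$ and $\lambda_2 a^2$ edges between each pair $w_i,w_j$; then $d_H(w_i)=2\lambda_1\binom a2+\lambda_2 a^2(p-1)=a\cdot d$, so $d_H(w_i)/\e(w_i)=d=2k$, and the multiplicities $\ell_H(w_i)/\binom a2=\lambda_1$, $m_H(w_i,w_j)/(\e(w_i)\e(w_j))=\lambda_2$ are integers — so (A3), (A5) and their colored analogues will pin down $G'$ precisely.

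The remaining task is to choose the $k$-edge-coloring of $H$. I would apply Theorem~\ref{eveneq}: since $d_H(w_i)=a\cdot d$ is even at every vertex, $H$ is an even graph, so it has an evenly-equitable $k$-edge-coloring; because $a\cdot d/k=2a$ is already an even integer equal at every vertex, evenly-equitability forces $d_{H(j)}(w_i)=2a$ for every $i$ and every color $j$, hence $d_{H(j)}(w_i)/\e(w_i)=2$, an even integer — exactly the hypothesis needed to invoke (A7). One must also ensure each color class $H(j)$ is connected; this needs a small argument, e.g.\ noting that the evenly-equitable coloring can be chosen so each $H(j)$ spans $V(H)$ (which it does, being $2$-regular there, provided no color class is a union of loops only — ruled out because condition~(iii), $\lambda_1\le \lambda_2 a(p-1)$, guarantees the loops at a vertex cannot consume all $k$ color classes' worth of half-edges there, so each color must use at least one non-loop edge at $w_i$), or alternatively by first decomposing $H$ into $k$ connected even subgraphs directly using Theorem~\ref{hdcm}-style constructions on the "blown-up" structure. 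Then Theorem~\ref{mainth} delivers $G'\cong K(a,\dots,a;\lambda_1,\lambda_2)$ with every $G'(j)$ connected and $2$-regular, i.e.\ a Hamiltonian cycle, completing the decomposition. The main obstacle I anticipate is precisely the connectivity bookkeeping for the color classes of $H$ — showing that condition~(iii) is exactly what prevents a color class from degenerating into loops (which would detach into a disconnected $2$-regular graph) — and making sure this interplays correctly with the evenly-equitable coloring so that (A7)'s hypothesis is genuinely met.
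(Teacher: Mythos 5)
Your overall architecture (elementary counting for necessity; amalgamation of $K(a^{(p)};\lambda_1,\lambda_2)$ into the $p$-vertex graph $H$ with $\lambda_1\binom{a}{2}$ loops per vertex and $\lambda_2a^2$ edges per pair, followed by Theorem \ref{mainth}) is exactly the paper's, and your verification of (i), (ii) and of the multiplicity bookkeeping for (A3)/(A5) is fine. But there are two genuine gaps. First, your argument for the necessity of (iii) does not work. The per-cycle inequality ``within-part edges of the cycle in $V_i$ $\le$ edges of the cycle leaving $V_i$'' is false in general: a Hamiltonian cycle can enter $V_i$ once and traverse all $a$ of its vertices as a single path, giving $a-1$ internal edges against only $2$ leaving edges, so your parenthetical bound $t-1\le 2$ fails for $t\ge 4$. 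Even where it holds, the aggregate comparison of $\lambda_1\binom{a}{2}$ with $m_G(V_i,V\setminus V_i)=\lambda_2a^2(p-1)$ only yields $\lambda_1(a-1)\le 2\lambda_2a(p-1)$, which for $a=2$ reads $\lambda_1\le 4\lambda_2(p-1)$ and is strictly weaker than (iii); and the final step ``divide $\lambda_1(a-1)\le\lambda_2a(p-1)$ by $a-1$ to get $\lambda_1\le\lambda_2a(p-1)$'' is an algebra slip. The count that works (and is the paper's) is simply that a Hamiltonian cycle meets $V_i$ in a disjoint union of paths covering its $a$ vertices, hence uses at most $a-1$ pure edges of $V_i$; summing over the $k=\tfrac12(\lambda_1(a-1)+\lambda_2a(p-1))$ cycles gives $\lambda_1\binom{a}{2}\le k(a-1)$, which is exactly (iii).

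Second, in the sufficiency direction the connectivity of the color classes of $H$ is a real gap that your proposed fix does not close. Applying Theorem \ref{eveneq} to all of $H$ does give $d_{H(j)}(w_i)=2a$ for every color, but an evenly-equitable coloring gives no control over which edges land in which class: a color class can use a non-loop edge at every vertex and still be disconnected (for $p\ge 4$ it could split into two even subgraphs on complementary sets of vertices of $H$), and when $\lambda_1(a-1)\ge 2$ a class can even be entirely loops at some vertex, so condition (iii) does not prevent degeneration in the way you suggest. The correct route is the one you mention only as an ``alternative'': first decompose the non-loop part $H^*\cong\lambda_2a^2K_p$ into Hamiltonian cycles by Theorem \ref{hdcm} (legitimate because $\lambda_2a^2(p-1)$ is even, which follows from (ii)), reserve $k$ of these cycles and give each its own color --- this is where (iii) is actually used in the sufficiency proof, being equivalent to $\lambda_2a^2(p-1)/2\ge k$ --- and only then evenly-equitably color the remaining even graph (all loops plus leftover non-loop edges) with the same $k$ colors. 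Each $H(j)$ then contains a spanning cycle of $H$, hence is connected, $d_{H(j)}(w_i)/\eta(w_i)=2$ is an even integer, and (A7) of Theorem \ref{mainth} delivers connected $2$-regular color classes in the detachment. Without that reservation step, the hypothesis of (A7) is met but the conclusion $\omega(G^*(j))=\omega(H(j))$ is useless, since $\omega(H(j))$ need not be $1$.
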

\begin{proof}
Let $s=\sum\limits_{i=1}^{p}{a_i}$. To prove the necessity, suppose $G$ is Hamiltonian decomposable. For $v\in V_i$, $1\leq i\leq p$, we have $d_G(v)=\la_1(a_i-1)+\la_2(s-a_i)$. Since $G$ is Hamiltonian decomposable, it is regular. So we have 
$\la_1(a_i-1)+\la_2(s-a_i)=\la_1(a_j-1)+\la_2(s-a_j)$ for every pair $1\leq i <  j\leq p$. Equivalently $\la_1(a_i-a_j)=\la_2(a_i-a_j)$. So $(\la_1-\la_2)(a_i-a_j)=0$ and since $\la_1\neq \la_2$, we have $a_i=a_j:=a$ for every pair $1\leq i <  j\leq p$. So we can assume that $G=K(a^{(p)};\la_1,\la_2)$. Therefore $s=pa$ and $d_G(v)=\la_1(a-1)+\la_2(pa-a)=\la_1(a-1)+\la_2a(p-1)$. Now by the Hamiltonian decomposability of $G$, the degree of each vertex
$$\la_1(a-1)+\la_2a(p-1) \text {    is an even integer}.$$

By the preceding paragraph, the number of Hamiltonian cycles of $G$ is $\frac{1}{2}(\la_1(a-1)+\la_2a(p-1))$. Let us say that an edge is pure if both of its endpoints belong to the same part. 
Each Hamiltonian cycle passes through every vertex of every part exactly once. Hence each Hamiltonian cycle contains at most $(a-1)$ pure edges from each part. Since the total number of pure edges in each part is $\la_1\binom {a}{2}$, we have 
$$\la_1\binom {a}{2} \leq \frac{(a-1)}{2}(\la_1(a-1)+\la_2a(p-1)).$$
So, 
$$ \frac{\la_1a(a-1)}{2} \leq \frac{(a-1)}{2}(\la_1(a-1)+\la_2a(p-1)).$$
Since $a>1$, it implies that $\la_1a \leq \la_1(a-1)+\la_2a(p-1)$. Thus $\la_1\leq \la_2a(p-1)$. 
Therefore conditions (i)-(iii) are necessary. Note that the necessity of condition (iii) can also be seen as an edge-connectivity issue. Of course $G$ has edge-connectivity at most $\lambda_2 a^2(p-1)$, as deleting all the edges incident with vertices in a fixed part disconnects the graph. Since $G$ has a Hamiltonian decomposition, it clearly  has degree equal to its edge-connectivity. Therefore, the degree of $G$, namely $\lambda_1(a-1)+\lambda_2a(p-1)$, is at most $\lambda_2 a^2(p-1)$.

To prove the sufficiency, suppose conditions (i)-(iii) are satisfied and let $H$ be a graph with $|V(H)|=p, \ell_H(y)=\la_1\binom{a}{2}$ for every $y\in V(H)$, and $m_H(y,z)=\la_2a^2$ for every pair $y,z\in V(H)$ and let $\eta$ be a function from $V(H)$ into $\mathbb{N}$ with $\eta(y)=a$ for all $y\in V(H)$. We note that $H$ is $(\la_1a(a-1)+\la_2a^2(p-1))$-regular. It is easy to see that $H$ is an amalgamation of $G$. In what follows we shall find an appropriate edge-coloring for $H$ and then we shall apply Theorem \ref{mainth}, to show that $H$ has a $\e$-detachment $G$ in which every color class induces a Hamiltonian cycle. 

Let $H^*$ be the spanning subgraph of $H$ whose edges are the non-loop edges of $H$. It is easy to see that $H^*\cong \la_2a^2K_p$. We claim that $\la_2a(p-1)$ is even. To see this, suppose $\la_2a(p-1)$ is odd; then $a$ is odd and $\la_1(a-1)$ is even. But then  $\la_1(a-1)+\la_2a(p-1)$ is odd, contradicting condition (ii) of the theorem. Therefore $\la_2a^2(p-1)$ is even and thus by Theorem \ref{hdcm}, $H^*$ is Hamiltonian decomposable.

Since $\la_2a^2K_p$ is $\la_2a^2(p-1)$-regular, it is decomposable into $\la_2a^2(p-1)/2$ Hamiltonian cycles by Theorem \ref{hdcm}. Now define $k=\big(\la_1(a-1)+\la_2a(p-1)\big)/2$. From (ii), $k$ is an integer. Now since $a > 1$ and $\la_2a(p-1)\geq \la_1$, we have the following sequence of equivalences:
$$
(a-1)(\la_2a(p-1)-\la_1)\geq 0 \Leftrightarrow \la_2a(p-1)(a-1)-\la_1(a-1)\geq 0 \Leftrightarrow $$
$$\la_2a^2(p-1)-\la_1(a-1)-\la_2a(p-1) \geq 0 \Leftrightarrow \frac{\la_2a^2(p-1)}{2} \geq \frac{\la_1(a-1)+\la_2a(p-1)}{2}.
$$
Hence, the number of Hamiltonian cycles in $H^*$ is at least $k$. Now let $\mathcal{C}_1,\ldots, \mathcal{C}_k$ be $k$ arbitrary Hamilton cycles of a Hamiltonian decomposition of $H^*$. Let $\mathcal{K}^*$ be a (partial) $k$-edge-coloring of $H^*$ such that all edges of each cycle $\mathcal{C}_i$ are colored $i$, for each $i\in \mathbb{Z}_k$. Now let $H^{**}$ be the spanning subgraph of $H$ whose edges are all the edges of $H$ that are uncolored in $H^*$. Recall that $H$ is $2ak$-regular, so for each $v\in V(H^{**})$ we have $d_{H^{**}}(v)=2ak-2k=2(a-1)k$. Therefore $H^{**}$ is an even graph and so by Theorem \ref{eveneq} it has an evenly-equitable edge-coloring $\mathcal{K}^{**}$ with $k$ colors $1,\ldots,k$ (Note that we are using the same colors we used to color edges of $H^*$). Therefore for each $j$, $1\leq j\leq k$, and for each $ y\in V(H^{**})$, we have $d_{H^{**}(j)}(y)=2(a-1)k/k=2(a-1)$. Now we can define the edges coloring $\mathcal{K}:E(H)\rightarrow \mathbb{Z}_k$ for $H$ as below:
$$
\mathcal{K}(e) = \left \{ \begin{array}{ll}
\mathcal{K}^*(e) & \mbox { if } e\in E(H^*)\backslash E(H^{**}),\\
\mathcal{K}^{**}(e) & \mbox { if } e\in E(H^{**}).
\end{array} \right. 
$$
So for each $j\in \mathbb{Z}_k$, for each $y\in V(H)$, we have $d_{H(j)}(y)=2+2(a-1)=2a$. Note that since all edges of each Hamiltonian cycle $\mathcal{C}_j$ are colored $j$, $1\leq j \leq k$, each color class $H(j)$ is connected.

So we have a $k$-edge-colored graph $H$ for which, for each $y,z\in V(H), y\neq z$, and each $j\in \mathbb{Z}_k$, $\eta(y)=a\geq 2$, $\ell_H(y)=\la_1\binom{a}{2}$, $m_H(y,z)=\la_2a^2$, $d_H(y)=2ak$, $d_{H(j)}(y)=2a$, $\omega(H(j))=1$.

Now by Theorem \ref{mainth}  there exists a loopless $\e$-detachment $G^*$ of $H$ with amalgamation function $\psi:V(G^*)\rightarrow V(H)$, $\e$  being the number function associated with $\psi$, such that for each $y,z\in V(H), y\neq z$, and each $j\in \mathbb{Z}_k$ the following conditions are satisfied:

\begin{itemize}
  \item $m_{G^*}(u,u')=\la_1\binom{a}{2}/\binom{a}{2}=\la_1$ for every pair of distinct vertices $u,u'\in \psi^{-1}(y)$;
  \item $m_{G^*}(u,v)=\la_2a^2/(aa)=\la_2$ for each $u\in \psi^{-1}(y)$ and each $v\in \psi^{-1}(z)$;
  \item $d_{G^*(j)}(u)=2a/a=2$ for each $u\in \psi^{-1}(y)$;
  \item $\omega(G^*(j))=\omega(H(j))=1$, since $d_{H(j)}(y)/\eta(y)=2a/a=2$. 
\end{itemize} 
From the first two conditions it follows that $G^*\cong K(a^{(p)};\la_1,\la_2)=G$. The last two conditions tells us that each color class is $2$-regular and connected, respectively; that is each color class is a Hamiltonian cycle. So we obtained a Hamiltonian decomposition of $K(a^{(p)};\la_1,\la_2)$ and the proof is complete. 
\end{proof}
\begin{remark}\textup{
 We may prove the necessity of condition (iii) of Theorem \ref{hdka1apl1l2} by a different counting argument. Let us say an edge is mixed if its endpoints are from different parts of $G$. Each Hamiltonian cycle starts from a vertex of a part $V_i$ for some $1\leq i \leq p$ and it will pass through every part at least once and it will eventually come back to the initial vertex in $V_i$.  Hence each Hamiltonian cycle contains at least $p$ mixed edges. On the other hand, the total number of mixed edges is $\la_2a^2\binom {p}{2}$. Therefore, 
$$\la_2a^2\binom {p}{2} \geq p \frac{1}{2}(\la_1(a-1)+\la_2a(p-1)).$$
So, 
$$ \frac{\la_2a^2p(p-1)}{2} \geq \frac{p(\la_1(a-1)+\la_2a(p-1))}{2}.$$
It implies that, $\la_2a(p-1)(a-1)-\la_1(a-1)\geq 0 $, so $(a-1)(\la_2a(p-1)-\la_1)\geq 0$ and since $a> 1$, we have $\la_1\leq \la_2a(p-1).$ 
}\end{remark}
\begin{remark}\textup{
 Observe that the equality in condition (iii) of Theorem \ref{hdka1apl1l2}  holds if and only if for each Hamiltonian decomposition, each Hamiltonian cycle contains exactly $(a-1)$ pure edges from every part, and exactly $p$ mixed edges. 
}\end{remark}

\section{Acknowledgement}
We would like to thank the referees for their helpful comments which improved the first draft of this paper.

\end{document}